\newcommand{\hgt}{\mathrm{ht}}
\newcommand{\Qbb}{\mathbb{Q}}
\newcommand{\Sc}{\mathcal{S}}
\newcommand{\br}[3]{{$#1$}$\lower4pt\hbox{$\tp\atop\raise4pt \hbox{$\scriptscriptstyle{#2}$}$} ${$#3$}}
\newcommand{\tw}[3]{{$#1$}${\,\scriptscriptstyle {#2}}\atop\raise9pt\hbox{$\scriptstyle\tp$} ${$#3$}}
\newcommand{\ttps}[2]{{#1}\raise5pt\hbox{$\lower12pt\hbox{$\scriptstyle\tp$}\atop \lower0pt\hbox{$\tilde\;$}$}\raise4.5pt\hbox{${\scriptstyle{#2}}$}}
\newcommand{\st}[1]{\mbox{${\,\scriptscriptstyle {#1}}\atop\raise5.5pt\hbox{$*$}$}}
\newcommand{\rd}[1]{\mbox{${\,\scriptscriptstyle {#1}}\atop\raise5.5pt\hbox{$\bullet$}$}}
\newcommand{\rt}[1]{\otimes_\chi}
\newcommand{\lt}[1]{\mbox{${\,\scriptscriptstyle {#1}}\atop\raise5.5pt\hbox{$\ltimes$}$}}
\newcommand{\btr}{\raise1.2pt\hbox{$\scriptstyle\blacktriangleright$}\hspace{2pt}}
\newcommand{\btl}{\raise1.2pt\hbox{$\scriptstyle\blacktriangleleft$}\hspace{2pt}}
\newcommand{\lcr}{\raise1.0pt \hbox{${\scriptstyle\rightharpoonup}$}}
\newcommand{\rcr}{\raise1.0pt \hbox{${\scriptstyle\leftharpoonup}$}}
\newcommand{\ttp}{{\lower12pt\hbox{$\tp$}\atop \hbox{$\tilde\;$}}}
\newcommand{\id}{\mathrm{id}}
\newcommand{\wt}{\mathrm{wt}}
\newcommand{\Ac}{\mathcal{A}}
\newcommand{\Jc}{\mathcal{J}}
\newcommand{\B}{{B}}
\newcommand{\Hg}{\mathfrak{H}}
\newcommand{\Ru}{\mathcal{R}}
\newcommand{\Zc}{\mathcal{Z}}
\newcommand{\Vc}{\mathcal{V}}
\newcommand{\Zg}{\mathfrak{Z}}
\renewcommand{\O}{\mathcal{O}}
\newcommand{\C}{\mathbb{C}}
\newcommand{\Z}{\mathbb{Z}}
\newcommand{\N}{\mathbb{N}}
\newcommand{\tp}{\otimes}
\newcommand{\U}{U}
\newcommand{\Fc}{\mathcal{F}}
\newcommand{\gm}{\gamma}
\newcommand{\dt}{\delta}
\newcommand{\op}{\oplus}
\newcommand{\la}{\lambda}
\newcommand{\tr}{\triangleright}
\newcommand{\tl}{\triangleleft}
\newcommand{\End}{\mathrm{End}}
\newcommand{\Span}{\mathrm{Span}}
\newcommand{\rk}{\mathrm{rk}}
\newcommand{\Rm}{\mathrm{R}}
\newcommand{\ad}{\mathrm{ad}}
\newcommand{\Ga}{\Gamma}
\newcommand{\g}{\mathfrak{g}}
\renewcommand{\a}{\mathfrak{a}}
\renewcommand{\b}{\mathfrak{b}}
\renewcommand{\k}{\mathfrak{k}}
\newcommand{\h}{\mathfrak{h}}
\newcommand{\s}{\mathfrak{s}}
\newcommand{\eps}{\epsilon}
\newcommand{\nn}{\nonumber}
\renewcommand{\l}{\mathfrak{l}}
\renewcommand{\c}{\mathfrak{c}}
\newcommand{\al}{\alpha}
\newcommand{\bt}{\beta}
\newcommand{\prt}{\partial}
\newcommand{\nbl}{\nabla}
\newcommand{\be}{\begin{eqnarray}}
\newcommand{\ee}{\end{eqnarray}}
\newtheorem{thm}{Theorem}[section]
\newtheorem{propn}[thm]{Proposition}
\newtheorem{lemma}[thm]{Lemma}
\newtheorem{corollary}[thm]{Corollary}
\newtheorem{definition}[thm]{Definition}
\newtheorem{example}[thm]{Example}
\newcommand{\parag}{\advance\prg by1 {\noindent\bf\thesection.\the\prg\hspace{6pt}}}
\begin{document}
\title{Mickelsson algebras and inverse Shapovalov form}

\author{
Andrey Mudrov${}^{\dag,\ddag}$, Vladimir Stukopin${}^\ddag$
\vspace{10pt}\\
\small ${}^{\dag}$ University of Leicester, \\
\small University Road,
LE1 7RH Leicester, UK,
\vspace{10pt}\\
\small
${}^{\ddag}$ Moscow Institute of Physics and Technology,\\
\small
9 Institutskiy per., Dolgoprudny, Moscow Region,
141701, Russia,
\vspace{10pt}\\
\small
 e-mail: am405@le.ac.uk,  stukopin.va@mipt.ru
}
\date{}

\maketitle

\begin{abstract}
Let $\Ac$ be an  associative algebra  containing the classical or quantum
universal enveloping algebra $U$  of a semi-simple complex Lie algebra.
Let $\Jc\subset \Ac$ designate the left ideal generated by positive root vectors in $U$.
We construct  the reduction algebra  of the pair $(\Ac,\Jc)$   via the inverse Shapovalov form of $U$.
\end{abstract}
\begin{center}
\end{center}
{\small \underline{Key words}: Mickelsson algebras, inverse Shapovalov form, quantum Lax operators}
\\
{\small \underline{AMS classification codes}: 17B10, 17B37.}
\newpage

\section{Introduction}
\label{SecIntro}
Step algebras were invented by J. Mickelsson as a tool for restricting representation of a simple Lie group to its semi-simple subgroup in his
studies of Harish-Chandra modules \cite{Mick}.
They provide a representation free formulation of the raising and lowering operators technique devised  by theoretical physicists for the needs
 of many-body problems and elementary particle physics \cite{NM}. Nowadays these algebras find numerous applications to harmonic analysis, \cite{Zh1,DER},
  Gelfand-Tsetlin bases \cite{Mol}, Yangians \cite{KN1,KN2,KN3} {\em etc}.

Mickelsson algebras are a  special case of reduction algebras associated with a pair $(\Ac,\Jc)$
where $\Ac$ is an associative algebra with unit and $\Jc\subset \Ac$ is a left ideal. The reduction algebra $R(\Ac,\Jc)$ is
a quotient by $\Jc$ of  its normalizer  $N(\Jc)$, that is  the maximal subalgebra in $\Ac$ where
$\Jc$ is a two-sided ideal. Elements of $R(\Ac,\Jc)$ are can be naturally viewed as homomorphisms of the $\Ac$-module $\Vc=\Ac/\Jc$ via the right multiplication.
There is a natural isomorphism between $R(\Ac,\Jc)$ and $\Vc^+=\ker \Jc\subset \Vc$ induced by the projection $\Ac\to \Vc$.

If $\Ac$ contains the universal  enveloping algebra of a semi-simple Lie algebra $\g$, one can consider the ideal $\Jc$ generated by
a maximal nilpotent subalgebra, $\g_+$ relative to a fixed triangular decomposition of $\g$.
Then  the Mickelsson algebra $Z(\Ac,\g)=R(\Ac,\Jc)$ becomes responsible
for restriction of $\Ac$-representations to $\g$. One can view it as the symmetry algebra of $\g$-singular
vectors, which parameterize $\g$-submodules, for instance, in
a module of highest weight when $\Ac=U(\a)$ is the universal enveloping algebra of a simple Lie algebra $\a\supset \g$.

One can similarly define the Mickelsson algebra  upon replacement of $U(\g)$ with the quantum group $U_q(\g)$, \cite{Kek}.
Then it is a deformation of its classical counterpart provided $q$ is not a root of unity. We retain the same notation $Z(\Ac,\g)$  for it although
we mostly deal with  quantum groups in this exposition. The classical version  can be obtained as the limit case $q\to 1$.

Let us point out  the most studied examples of Mickelsson algebras.
For finite dimensional $\g$ of classical type they include  $\a=\g\op \g$ with diagonal embedding of $\g$. Such $Z(\a,\g)$ is responsible for decomposition of
tensor product of $\g$-modules \cite{KO}.
Another example corresponds to  reductive pairs participating in  the Gelfand-Tsetlin reduction \cite{GTs1,Mol}.
With regard to infinite dimensional $\a$ one should mention Yangians and twisted Yangians \cite{KN1,KN2,KN3}.

The modern approach to Mickelsson algebras is based on the theory of extremal projectors \cite{AST}, after works of Zhelobenko \cite{Zh2,Zh}.
Extremal projector of $U(\g)$ and, respectively, its quantum counterpart allows to construct  $Z(\Ac,\g)$ from $\Ac$ and
provides  a Poincare-Birkhoff-Witt basis. However the  action of the extremal projector on $\Ac$ is hard to explicitly compute in general.
We addressed this problem in \cite{MS} utilizing an approach of Hasse diagrams associated with classical or quantum $\g$-modules.
As a result, we obtained explicit expression for a PBW basis in $Z(\a,\g)$ through a basis in $\a$ in the classical case
and matrix entries of quantum Lax operators for $q\not =1$.

The Hasse diagram technique of \cite{MS} appears  to be pretty close to the one used for  inversion of the  Shapovalov form in \cite{M1}.
This form is a well established concept in representation theory and applications \cite{Shap}. Its inverse  is a key
ingredient of dynamical quantum groups \cite{EV1} and equivariant quantization of homogeneous spaces \cite{AL,M5}.
Its matrix elements participate in a construction of Shapovalov elements delivering singular vectors in reducible Verma modules \cite{M5}.
On the other hand,  a relation between  Shapovalov elements and Mickelsson algebras  has been observed long  ago \cite{Brun, Cart}.
In this paper, we explore those connections to a full extent  and reformulate the results of \cite{MS} with the help of the inverse Shapovalov  form. More precisely, with   its
lift $\Sc\in U_q(\g_+)\tp U_q(\b_-)$ (a completed tensor product), where $\b_-\subset \g$ is the negative Borel subalgebra and $\g_+$ its opposite  maximal nilpotent Lie subalgebra in $\g$.
This finding should   not be a surprise
because inverse contravariant forms and extremal projectors are two alternative ways to construct  $\g_+$-invariants in tensor products of $\g$-modules, see e.g. \cite{M3}.

Let us briefly explain two alternative constructions of $Z(\Ac,\g)$ presented in this paper, focusing on the $q\not =1$ setting. We consider two  comultiplications $\Delta$ and $\tilde \Delta$ on $U_q(\a)$ intertwined by the R-matrix with a  cut-off diagonal Cartan factor.
Suppose that $X$ is a $U_q(\g_+)$-module that is realized as a submodule in $\Ac$ under the adjoint $U_q(\b_+)$-action restricted to $U_q(\g_+)$.
We assume that  $X$ is   $U_q(\h)$-diagonalizable  with finite dimensional weight subspaces.
Let $X^*$ denote the dual vector space to $X$.  We associate with $X$ a matrix $S_X\in \End(X)\tp \Ac$ and a vector $\psi_{X^*}\in X^*\tp \Ac$
that feature the following transformation properties:
$$
(1\tp u)\psi_{X^*}=\psi_{X^*}\Delta(u), \quad \Delta(u) S_X\in  \End(X)\tp \Jc,   \quad \forall u\in U_q(\g_+).
$$
Here we mean the natural right $\End(X)$-action on $X^*$ by duality.
Then the $\Ac$-entries of the vector  $Z_{X^*}=\psi_{X^*} S_X $ are  in $\Vc^+ $, modulo the ideal $\Jc$.

A "dual"  construction involves a matrix $\tilde S_X\in \End(X)\tp \Ac$ and a vector $\psi_X\in X\tp \Vc$ obeying
$$
(1\tp u)\tilde S_X=\tilde S_X'\tilde \Delta(u), \quad \tilde \Delta (u)\psi_X =0
$$
for each homogeneous $u\in U_q(\g_+)$ and certain $\tilde S_X'$  depending on the weight of $u$.
Then  components of the vector $Z_X=\tilde S_X \psi_X$
 are in $ \Vc^+ \simeq Z(\Ac,\g)$. Moreover, $Z_X=(\id\tp \wp)(\psi_X)$, where $\wp$ is the extremal projector of $U_q(\g)$.


The matrix $S_X$ is a reduction
to the representation $X$ of the universal Shapovalov matrix $\Sc\in U_q(\g_+)\tp U_q(\b_-)$ relative to left Verma modules and comultiplication $\Delta$.
The matrix $\tilde S_X$ is a reduction of a twisted universal Shapovalov matrix of {\em right} Verma modules corresponding to comultiplication $\tilde \Delta$.
A remarkable thing about the constructions described above is that a morphism from the category of right modules participates  in the category
of left modules and {\em vice versa}.


Let us emphasize  that the appearance of two different comultiplications in our approach is not just a reflection of the two alternative constructions
of $Z(\Ac,\g)$. The Shapovalov matrix of either approach  is itself expressed through the twist linking $\Delta$ and $\tilde \Delta$.
This interpretation of the main building block becomes obscure in the limit $q\to 1$ although the classical version of the theory can be developed in parallel.

The paper is organized in the following way.
After a short reminder of quantum group basics  in Section \ref{SecQuantGroups} we give a brief introduction into the theory of Mickelsson algebras in
Section \ref{SecMickAlg}. Section \ref{secAlgRout} is devoted to Hasse diagrams associated with representations of quantum groups and to algebra of routes.
Construction of $Z(\Ac,\g)$ via right Shapovalov matrix is presented in Section \ref{secRigthShMat}.
An alternative construction via left Shapovalov matrix is given in Section \ref{secLeftShMat}.
\section{Basic notation and quantum group conventions}
\label{SecQuantGroups}
For details on quantum groups, the reader is referred either to original Drinfeld's talk \cite{D1} or to the textbook \cite{ChP}.
In this exposition, we are also  using notation of \cite{MS}. Here we remind  the basics for reader's convenience.
Throughout the paper,  $\g$ is a semi-simple complex Lie algebra with
a triangular decomposition  $\g=\g_-\op \h\op \g_+$, where $\g_\pm$ are  maximal nilpotent Lie subalgebras and
 $\h$ the Cartan subalgebra.

Let
$\Rm \subset \h^*$ denote the root system of $\g$ and  $\Rm^+$ the subset of positive roots with basis $\Pi$
of simple roots. The root lattice generated by $\Pi$ is denoted by $\Ga\subset \h^*$, with the positive semigroup $\Ga_+=\Z_+\Pi\subset \Ga$.
Height   $\hgt(\mu)\in \Z_+$ of a weight $\mu$ from $\Ga_+$ is  the sum of its coordinates in the expansion over the basis $\Pi$.

An $\ad$-invariant Killing form on $\g$ is denoted by  $(\>.\>,\>.\>)$; it is restricted  to $\h$ and transferred to $\h^*$ by duality.
For every $\la\in \h^*$ there is   a unique element $h_\la \in \h$ such that $\mu(h_\la)=(\mu,\la)$, for all $\mu\in \h^*$.
We assume that $q\in \C$ is non-zero and  not a root of unity. For  $z\in \h+\C$, we use notation  $[z]_q=\frac{q^{z}-q^{-z }}{q-q^{-1}}$.

The standard Drinfeld-Jimbo quantum group $U_q(\g)$ is an associative $\C$-algebra with unit generated by $e_\al$, $f_\al$, and invertible $q^{h_\al}$ labeled by simple roots $\al$. In particular, they satisfy
$$
q^{h_\al}e_\bt=q^{ (\al,\bt)}e_\bt q^{ h_\al},
\quad
[e_\al,f_\bt]=\dt_{\al,\bt}[h_\al]_q,
\quad
q^{ h_\al}f_\bt=q^{-(\al,\bt)}f_\bt q^{ h_\al},\quad \al, \bt \in \Pi.
$$
The full set of relations can be found in \cite{ChP}.

We shall consider two Hopf algebra structures on $U_q(\g)$ with comultiplications
$$
\Delta(f_\al)= f_\al\tp 1+q^{-h_\al}\tp f_\al,\quad \Delta(q^{\pm h_\al})=q^{\pm h_\al}\tp q^{\pm h_\al},\quad\Delta(e_\al)= e_\al\tp q^{h_\al}+1\tp e_\al,
$$
$$
\tilde \Delta(f_\al)=f_\al\tp 1 +q^{h_\al}\tp f_\al,\quad \tilde \Delta q^{\pm h_\al}=q^{\pm h_\al}\tp q^{\pm h_\al}, \quad \tilde \Delta(e_\al)=e_\al\tp q^{-h_\al}+1\tp e_\al,
$$
defined on the generators and extended further as algebra homomorphisms. 
Their antipodes are given by
$$
\gm( f_\al)=- q^{h_\al}f_\al, \quad \gm( q^{\pm h_\al})=q^{\mp h_\al}, \quad \gm( e_\al)=- e_\al q^{-h_\al},
$$
$$
\tilde \gm( f_\al)=- q^{-h_\al}f_\al, \quad \tilde \gm( q^{\pm h_\al})=q^{\mp h_\al}, \quad \tilde \gm( e_\al)=- e_\al q^{h_\al}.
$$
The counit homomorphism $\eps\colon U_q(\g)\to \C$ is the same for $\Delta$ and $\tilde \Delta$ and  returns
$$
\eps(e_\al)=0, \quad \eps(f_\al)=0, \quad \eps(q^{h_\al})=1
$$
on the generators.

We denote by
$U_q(\h)$ the Cartan subalgebra in $U_q(\g)$ generated by $\{q^{\pm h_\al}\}_{\al\in \Pi}$.
The subalgebras generated by $\{e_\al\}_{\al\in \Pi}$ and $\{f_\al\}_{\al\in \Pi}$ are denoted as $U_q(\g_+)$, and, respectively, $U_q(\g_-)$.
Quantum Borel subgroups are defined as $U_q(\b_\pm)=U_q(\g_\pm)U_q(\h)$; they are Hopf subalgebras in $U_q(\g)$.

Let $\Ru$ denote  a quasitriangular structure relative to $\Delta$ and set $\check{\Ru}=q^{-\sum_{i}h_i\tp h_i}\Ru$, where $\{h_i\}_{i=1}^{\rk \g}$ is an orthonormal basis in $\h$.
One can choose $\Ru$ such that $\check{\Ru}\in U_q(\g_+)\tp U_q(\g_-)$ (a completed tensor product).
The element $\check{\Ru}$ is a Hopf algebra twist relating the two comultiplications:
$$
\check{\Ru}\Delta(u)=\tilde \Delta(u)\check{\Ru}, \quad \forall u\in U_q(\g).
$$
The element $\check{\Ru}$ plays a central role in this exposition.


Given a $U_q(\h)$-module $V$, a non-zero vector $v$ is said to be of weight $\mu=\wt(v)$ if $q^{h_\al}v=q^{(\mu,\al)} v$ for all $\al\in \Pi$.
The linear span of such vectors is denoted by $V[\mu]$.
A $U_q(\g)$-module $V$ is said to be of highest weight $\la$ if it is generated by a weight vector $v\in V[\la]$ that
is killed by all $e_\al$. Such a vector $v$ is called highest; it is defined up to a non-zero scalar multiplier.

For each $\mu\in \h^*$ we denote by  $\tau_\mu$ the automorphism of the algebra $U_q(\h)$ defined by the assignment $\tau_\mu\colon  q^{\pm h_\bt}\mapsto q^{\pm h_\bt}q^{\pm(\mu,\bt)}$.
We extend it to an automorphism of  $U_q(\b_-)$ such that $\tau_\mu \colon  h \phi\mapsto (\tau_{\mu} h) \phi$ for $h\in \hat U_q(\h)$ and $\phi\in  U_q(\g_-)$.
It is an algebra map indeed because
  $\phi h= \tau_{-\nu}(h)\phi$ for $\phi$ of weight $\nu$ (with respect to the adjoint action) and
$\tau_\mu\circ \tau_\nu=\tau_\nu\circ \tau_\mu$ for all $\mu, \nu\in \h^*$.

\section{Mickelsson algebras}
\label{SecMickAlg}
In this section we recall the definition and  basic facts about Mickelsson algebras.

Let $U$ denote  either the classical or quantum universal enveloping algebra of $\g$ with
triangular factorization $U^-U^0U^+$ relative to the polarization $\g=\g_-\op \h\op \g_+$. The subalgebras $U^\pm$ are endowed with natural grading by setting
$\deg(u)=1$ on their simple root generators. $u$. By $B^\pm$ we denote the quantum Borel subalgebras $U^\pm U^0$.

Suppose that $U$ is contained in a unital associative algebra $\Ac$, and $\Ac$ is diagonalizable over $U^0$ with finite dimensional weight spaces.
Define a left ideal $\Jc=\Jc_+=\Ac\g_+$ as the one generated by $e_\al$ with $\al \in \Pi$.
Similarly one defines a right ideal $\Jc_-=\g_- \Ac$ generated by $f_\al$ with $\al \in \Pi$.

Let  $N(\Jc)$ denote the normalizer of $\Jc$ in $\Ac$, i.e.
the set of elements  $a\in \Ac$ such that $\Jc a \subset \Jc$. It is clear that $N(\Jc)$ is an algebra and $\Jc$ is a two-sided ideal in it.

\begin{definition}
  The quotient $N(\Jc)/\Jc$ is called Mickelsson (step, reduction) algebra of the pair $\Ac\supset U$ and denoted by $Z(\Ac,\g)$.
\end{definition}
In particular,  $\Ac$ can be  either classical or quantized universal enveloping algebra
of a simple Lie algebra $\a$ containing $\g$ (provided the embedding $\a\supset \g$  is quantizable in the $q\not=1$ case). Then we denote $Z(\Ac,\g)$ by $Z(\a,\g)$.

We denote by  $\hat U^0$ the ring of fractions over $[h_\mu-c]_q$ ($h_\mu-c$ in the classical case)
where $\mu \in \Gamma_+$ and $c$ ranges in a subset in $\C$ depending on the weight structure of $\Ac$. In particular,
if all weights of $\Ac$ are integral, it is sufficient to require  $c\in \Qbb$.
Assuming the adjoint action of $U^0$ on $\Ac$ diagonalizable,
 we extend them
accordingly, to $\hat \Ac$ and $\hat \Jc_\pm$. The Mickelsson algebra of the pair $(\hat \Ac, \hat \Jc)$ will be denoted by $\hat Z(\Ac, \Jc)$.
It is clear that $\hat Z(\Ac, \Jc)\simeq Z(\Ac, \Jc)\tp_{U^0}\hat U^0$.

We suppose that there exist further extensions  $\hat {U}\subset \breve{\U}$, $\hat {\Ac}\subset \breve{\Ac}$,
$\hat \Jc_\pm\subset \breve{ \Jc}_\pm$
such that $\Jc_\pm= \breve{ \Jc}_\pm\cap \Ac$, and there is an element (extremal projector) $\wp\in \breve  { \U}$ of zero weight satisfying
\be
\Jc_+\wp =0=\wp\Jc_-, \quad \wp^2=\wp, \quad \wp-1\in \breve { \Jc}_-\cap \breve  {\Jc}_+.
\label{extr_proj}
\ee
Such an extension does exist in the trivial case $\Ac=U$; then it requires a completion of $\hat \U$ with certain infinite series.
In general, it is sufficient to assume, for instance, that  $\Ac$ is a free left or right regular $U$-module generated by a locally finite adjoint $U$-submodule.
In particular, if $\Ac=U(\a)$ of a finite dimensional Lie algebra $\a\supset \g$  or $\Ac=U_q(\a)\supset U_q(\g)=U$, where $\g$ is the  semi-simple part of a Levi subalgebra $\g\subset \a$.

The exact formulas for $\wp$ can be found in $\cite{K}$. Here we note  that $\wp$ is a product of factors
each of which is a  series in $f_\al^k e_\al^k$, $\al \in \Rm^+$,  with coefficients in $\hat U^0$.

The inclusion in (\ref{extr_proj}) implies the following isomorphisms of $\hat U^0$-modules:
$$
\hat \Ac/\hat \Jc_+ \simeq \hat \Ac\wp, \quad  \hat \Ac/\hat \Jc_-\simeq  \wp \hat \Ac, \quad
\hat \Ac/(\hat \Jc_-+\hat\Jc_+) \simeq  \wp \hat \Ac\wp.
$$
We denote by $\hat \Vc$ $\Ac$-module $\hat \Ac/\hat \Jc_+$  and by $1_{\hat\Vc}$ the image of $1$ in $\hat \Vc$.

\begin{thm}[cf. \cite{K}]
\label{thmMick}
Projection $\varpi\colon  \hat \Ac\to \breve {\Ac}$,
$
a\mapsto \wp a\wp,
$ factors through an isomorphism of algebras
$$
\hat Z(\Ac,\g)\to \wp \hat \Ac \wp.
$$
\end{thm}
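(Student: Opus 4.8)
The plan is to realize the induced map concretely: I will show that the restriction of $\varpi$ to the normalizer $N(\hat\Jc_+)$ of $\hat\Jc_+$ in $\hat\Ac$ is an algebra homomorphism onto $\wp\hat\Ac\wp$ whose kernel is exactly $\hat\Jc_+$. Since $\hat\Jc_+$ is a two-sided ideal of $N(\hat\Jc_+)$ and $\hat Z(\Ac,\g)=N(\hat\Jc_+)/\hat\Jc_+$ by definition, this is precisely the assertion.

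First I would analyse how $\wp$ acts on the module $\hat\Vc=\hat\Ac/\hat\Jc_+$. As $\wp\in\breve\U$ has zero weight and $\breve\U$ acts on $\hat\Vc$, it defines an operator there, and $\wp^{2}=\wp$ makes it an idempotent, so $\hat\Vc=\wp\hat\Vc\oplus(1-\wp)\hat\Vc$. The relations in (\ref{extr_proj}) pin down the two summands: $e_\al\wp=0$ forces $\wp\hat\Vc\subseteq\hat\Vc^{+}:=\{v\in\hat\Vc:\g_+v=0\}$, while $\wp-1\in\breve\Jc_+=\breve\Ac\g_+$ gives $(\wp-1)v=0$ for $v\in\hat\Vc^{+}$, so $\wp\hat\Vc=\hat\Vc^{+}$; symmetrically $\wp f_\al=0$ and $\wp-1\in\breve\Jc_-=\g_-\breve\Ac$ identify $(1-\wp)\hat\Vc$ with $\hat\Jc_-\hat\Vc=\g_-\hat\Vc$. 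Hence
$$
\hat\Vc=\hat\Vc^{+}\oplus\hat\Jc_-\hat\Vc .
$$
I would then pull this decomposition back along the canonical projection $\pi\colon\hat\Ac\to\hat\Vc$. Since $\hat\Jc_+$ is generated as a left ideal by the $e_\al$, one has $a\in N(\hat\Jc_+)$ iff $e_\al a\in\hat\Jc_+$ for all simple $\al$, i.e.\ iff $\pi(a)\in\hat\Vc^{+}$; hence $\pi^{-1}(\hat\Vc^{+})=N(\hat\Jc_+)$. Likewise $\pi(\hat\Jc_-)=\hat\Jc_-\hat\Vc$ because $\hat\Jc_-$ is a right ideal, so $\pi^{-1}(\hat\Jc_-\hat\Vc)=\hat\Jc_++\hat\Jc_-$. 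The displayed direct sum thus gives
$$
\hat\Ac=N(\hat\Jc_+)+\hat\Jc_-, \qquad N(\hat\Jc_+)\cap(\hat\Jc_++\hat\Jc_-)=\hat\Jc_+ .
$$

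Next I would verify multiplicativity of $\varpi$ on $N(\hat\Jc_+)$ via the identity, valid for all $a,b\in\hat\Ac$,
$$
\varpi(ab)-\varpi(a)\varpi(b)=\wp ab\wp-\wp a\wp b\wp=\wp\,a(1-\wp)b\,\wp .
$$
If $b\in N(\hat\Jc_+)$, then $xb\in\hat\Jc_+$ for every $x\in\g_+$, so expressing $\wp-1\in\breve\Jc_+$ through $\g_+$ with coefficients in $\breve\Ac$ yields $(\wp-1)b\in\breve\Jc_+$; as $\breve\Jc_+$ is a left ideal, $a(1-\wp)b\in\breve\Jc_+$, and then the right-hand side lies in $\breve\Jc_+\wp=0$ by (\ref{extr_proj}). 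Therefore $\varpi(ab)=\varpi(a)\varpi(b)$ whenever $b\in N(\hat\Jc_+)$, and $\varpi(1)=\wp^{2}=\wp$ is the unit of $\wp\hat\Ac\wp$. Finally I would invoke the already recorded isomorphism $\hat\Ac/(\hat\Jc_-+\hat\Jc_+)\simeq\wp\hat\Ac\wp$: under it, $\varpi$ is the canonical projection $\hat\Ac\to\hat\Ac/(\hat\Jc_-+\hat\Jc_+)$ followed by that isomorphism, so the two facts above show that $\varpi|_{N(\hat\Jc_+)}$ is surjective onto $\wp\hat\Ac\wp$ (because $N(\hat\Jc_+)+\hat\Jc_-=\hat\Ac$) with kernel $\hat\Jc_+$ (because $N(\hat\Jc_+)\cap(\hat\Jc_++\hat\Jc_-)=\hat\Jc_+$). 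Combined with multiplicativity, this means $\varpi$ descends to an algebra isomorphism $\hat Z(\Ac,\g)\xrightarrow{\ \sim\ }\wp\hat\Ac\wp$.

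The step I expect to be the real obstacle is the first one: showing that $\wp$ is a genuinely well-defined operator on $\hat\Vc$ with exactly the image $\hat\Vc^{+}$ and kernel $\hat\Jc_-\hat\Vc$ claimed, and that every product involving $\wp$ in the multiplicativity computation is meaningful. This is where the passage to the rings of fractions over the $[h_\mu-c]_q$ and to the completions $\breve\U\supset\hat\U$, $\breve\Ac\supset\hat\Ac$ is indispensable: the series representing the factors of $\wp$ must converge on $\hat\Vc$ (which finite-dimensionality of weight spaces together with the localization ensures), $\breve\U$ must preserve $\hat\Vc$, and all manipulations with $\wp$ must be legitimate inside $\breve\Ac$. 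Once that framework — built into (\ref{extr_proj}) and the surrounding hypotheses — is granted, everything else is formal, which is why the result is stated only "cf." the cited reference.
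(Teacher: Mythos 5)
The paper offers no proof of Theorem \ref{thmMick}: it is stated with ``cf.\ \cite{K}'' and the argument is delegated to that reference, so there is nothing internal to compare against. Your proof is correct and is essentially the standard argument one would extract from \cite{K}: the decomposition $\hat\Vc=\hat\Vc^{+}\oplus\hat\Jc_-\hat\Vc$ induced by the idempotent $\wp$, the identification $N(\hat\Jc_+)=\pi^{-1}(\hat\Vc^{+})$, the resulting equalities $\hat\Ac=N(\hat\Jc_+)+\hat\Jc_-$ and $N(\hat\Jc_+)\cap(\hat\Jc_++\hat\Jc_-)=\hat\Jc_+$, and the multiplicativity identity $\wp ab\wp-\wp a\wp b\wp=\wp a(1-\wp)b\wp$ combined with the coset isomorphism $\hat\Ac/(\hat\Jc_-+\hat\Jc_+)\simeq\wp\hat\Ac\wp$ that the paper records just before the theorem. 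One remark: throughout you use $e_\al\wp=0=\wp f_\al$, i.e.\ $\hat\Jc_+\wp=0=\wp\hat\Jc_-$, whereas (\ref{extr_proj}) literally reads $\wp\Jc_+=0=\Jc_-\wp$; your reading is the only one consistent with the isomorphisms $\hat\Ac/\hat\Jc_+\simeq\hat\Ac\wp$ and $\hat\Ac/\hat\Jc_-\simeq\wp\hat\Ac$ that the paper derives from (\ref{extr_proj}), so you have silently (and correctly) repaired a typo rather than introduced an error. Your closing caveat about where the analytic content lives --- well-definedness and convergence of the action of $\wp$ on $\hat\Vc$ inside the completion $\breve\Ac$ --- is accurately placed; that is exactly what the standing hypotheses surrounding (\ref{extr_proj}) are assumed to guarantee.
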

From now on we suppose that $\Ac$ is a free $U^-\tp B^+$-bimodule relative to the regular actions
generated by (a basis of) a vector space $\Zc\subset \Ac$.
Then $\Zc \hat U^0$ is isomorphically mapped onto the double coset algebra $\hat \Ac/(\hat \Jc_- + \hat \Jc_+)$.
Two types of $\Zc$ may be of interest within our approach:
a) either $\Zc$ is  a  locally
finite adjoint  $B^+$-submodule or
b)
$\Zc$ has a basis of ordered
monomials $\psi_1^{m_1}\ldots \psi_k^{m_k}$
generated by weight elements $\{\psi_i\}_{i=1}^k$ of a finite dimensional
  $B^+$-submodule $\Zg$.
In the latter case we say that $\Zg\subset \Zc$ delivers a Poincare-Birkhoff-Witt (PBW) basis over $U$.
The set $\{\psi_i\}_{i=1}^k$ is called a PBW system and
its elements PBW generators.
It is known that the image of a PBW-system in $\hat \Ac$
generates a PBW-basis in $\pi \hat \Ac\pi\simeq \hat Z(\Ac,\g)$, as a $\hat U^0$-module.

More generally, we may assume that $\Zc$ and $\Zg$ satisfy the above requirements modulo $\Jc$, that is, upon  embedding in $\Vc$.

In particular, we are interested in the following two cases.
First suppose that  $\Ac=U(\a)$ is the universal enveloping algebra of a Lie algebra  $\a\supset \g$. Let
$\Zg\subset \a$ be a $\g$-module complementary  $\g$.
Any basis in $\Zg$ is a PBW system over $U$ according to the PBW theorem.

The second case of our concern is when $\a$ is a simple Lie algebra and $\g$ is the derived Lie algebra of a Levi subalgebra in $\a$.
Then we take $\Ac=U_q(\a)$, $U=U_q(\g)$,  and
consider the $U_q(\a)$-module $\a_q$ deforming the adjoint module $\a$.
It contains a $U_q(\g)$-submodule $\g_q$ deforming the adjoint module $\g$.
We realize the quotient $\a_q/\g_q$ as a $U_q(\g)$-submodule in $\Ac$ (possibly modulo $\Jc$) and take it for the role of $\Zg$.
There is a basis in $\Zg$ delivering  a PBW system in $U_q(\a)$ over $U_q(\g)$ upon extension of the ring of scalars to $\C[[\hbar]]$ with $q=e^\hbar$,
see Section \ref{secLeftShMat}.

Other examples may include the Heisenberg double $\Ac=U^*\rtimes U$, where $U^*$ is the function algebra on the (quantum) algebraic group of $\g$,
$\Ac =U\tp U$ with the diagonal embedding of $U=U_q(\g)$, see Section \ref{SecLeftZ},  $\Ac=\mathrm{T}(\Zg)\rtimes U$, where $\mathrm{T}(\Zg)$ is the tensor algebra of a $U$-module $\Zg$ {\em etc}.
\section{Algebra of routes}
\label{secAlgRout}
In this section we briefly recall an algebraic structure on routes in  Hasse diagram associated with representations of $U_q(\g)$, following \cite{MS,M1}; more  details can be found therein.

\subsection{Hasse diagrams}
Let $X$ be a $U^+$-module that is extendable to a module over $B^+$ and diagonalizable over $U^0$ with $\dim X[\mu]<\infty $ for all $\mu\in \h^*$.
Such $U^+$-modules we call graded. Note that they may have non-isomorphic $B^+$-extensions. 
We assume that for any pair of vectors from $X$, their weight difference is in the root lattice $\Ga$.
We associate with $X$ a Hasse diagram $\Hg(X)$ as follows. It is an oriented graph whose nodes are elements of a fixed weight basis $\{v_i\}_{i\in I_X}\subset X$,
$\wt(v_i)=\nu_i$. We will identify them with elements of the index set $I_X$.
 Arrows $i\stackrel{e_\al}{\longleftarrow} j$ are marked with simple  root vectors $e_\al$ if $\pi^\al_{ij}=\pi(e_\al)_{ij}\not =0$,
then  $\nu_i-\nu_j=\al$.

 A sequence of adjacent arrows
$$
m_1
\stackrel{\al_1}{\longleftarrow} m_2\stackrel{\al_2}{\longleftarrow} \ldots \stackrel{\al_k}{\longleftarrow} m_{k+1}
$$
is called path (of length $k$) from $m_{k+1}$ to $m_1$.
A node $i$ is superior to a  node $j$, $i\succ j$, if there is a path from $j$ to $i$. This defines a partial order on $\Hg(X)$.

A route $\vec m =(m_1,\ldots m_{k+1})$ from $j$ to $i$ is an arbitrary ordered sequence
$$
i=m_1\succ \ldots \succ m_{k+1}=j.
$$
Typically we orient routes in the right-to-left ascending order.
We denote $\max(\vec m)=i $ and $\min(\vec m)=j$ and say that  $\vec m$ is a route $i\dashleftarrow j$ or write it as  $i\stackrel{\>\>\vec m}{\dashleftarrow} j$.
We will also suppress one of the nodes to emphasize the start or end node: for instance, $i\stackrel{\>\>\vec m}{\dashleftarrow} $ means that $\vec m$ is route $i\dashleftarrow$
(terminating at $i$) whereas $\stackrel{\>\>\vec m}{\dashleftarrow} j$ is a route $\vec m$ starting from $j$.

 The number  spans in $\vec m$ is called length of $\vec m$ and denoted by $|\vec m|$.
For instance, a path to $i$ from  $j$ is a route $i\dashleftarrow j$ of maximal length equal to $\hgt(\nu_i-\nu_j)$ that is the number of arrows in it.


Given $\stackrel{\>\>\vec m}{\dashleftarrow} j \succ i\stackrel{\>\>\vec n}{\dashleftarrow} $, we write $\vec m\succ \vec n$. Then there is a route $(\vec m,\vec n)$ that includes all nodes from $\vec m$ and $\vec n$.
We drop the vector superscript for routes consisting of one node.
For instance, $(\vec m, \vec n)=(i,j,k)$ if $\vec m=(i)\succ (j,k)=\vec n$.

For two routes $i\stackrel{\>\>\vec m}{\dashleftarrow} k\stackrel{\>\>\vec n}{\dashleftarrow} j$ we get
a route $i\stackrel{\>\>\vec m\cdot \vec n}{\dashleftarrow} j$, the concatenation of   $\vec m$ and $\vec n$.
Its set of nodes is the union of the two. Concatenation is a partial associative operation on routes.

Removing an arbitrary subset of nodes from a route is a route again (possibly empty).

\subsection{Auxiliary module $\Phi_X$}

Denote by  $\Phi_X$ a free right  $\hat U^0$-module generated by routes in $\Hg(X)$. Introduce a left  $\hat U^0$-action by
assigning a weight of $\nu_j-\nu_i$ to a route $i\stackrel{\>\>\vec m}{\dashleftarrow} j$: it is regarded as a character $\hat U^0\to \C$ relative to the induced adjoint action
on $\Phi_X$.
Extend concatenation  as a partial operation
to $\hat U^0$-lines  $\vec m \hat U^0$, by associativity.

Define   $\Fc=\frac{1}{q-q^{-1}}(\check{\Ru}-1\tp 1)\in U^+\tp U^-$
and consider a matrix $F=(\pi\tp \id)(\Fc)=\sum_{i,j\in I_X}e_{ij}\tp \phi_{ij}$, where $e_{ij}$ are the matrix units obeying
$e_{ij}e_{mn}=\dt_{jm}e_{in}$. The entry $\phi_{ij}$ carries weight $\nu_j-\nu_i$, so the matrix $F$ is strictly lower triangular.

To each route $\vec m$ from $\Hg(X)$ we assign an element
$$
 \phi_{\vec m}= \phi_{m_1,m_2} \ldots  \phi_{m_{k-1},m_k}\in U^-
$$
end extend this assignment to a $\hat U^0$-bimodule homomorphism $p_\Phi\colon \Phi_X\to \hat B^-$.
For a route $(i)$ of zero length we set $\phi_i=1$.
The map $p_\Phi$ is multiplicative with respect to concatenation:
$$
p_\Phi( \vec m\cdot \vec n)=p_\Phi( \vec  m)p_\Phi( \vec n), \quad \vec  m,\vec  n\in \Phi_X.
$$

Arrows in $\Hg(X)$ are in bijection with ordered pairs $(l,r)$  of nodes they connect. We call such pairs simple.
Set $P(\al)=\{(l,r)\in I_X^2|\>l\stackrel{\al}{\longleftarrow}r\}$ for $\al\in \Pi$.
We define an operator $\prt_{l,r}\colon \Phi_X\to \Phi_X\tp_{\hat U^0} \Phi_X$ for each $(l,r)\in P(\al)$ as follows.
We set it zero on routes of zero length.
On routes of length $1$ we put
\be
\begin{array}{rrccc}
\prt_{l,r} (l,r)&=&(l)\tp (r)[h_\al]_q  ,\\
\prt_{l,r} (l,j)&=&- (l)\tp q^{-h_\al}(r,j),
&r\succ j,
\\
\prt_{l,r}(i,r) &=&(i,l)q^{h_\al}\tp (r),
&i\succ l,
\end{array}
\label{prt on phi}
\ee
and zero otherwise.
We extend it to all routes as  a homomorphism of right $\hat U^0$-modules and a derivation with respect to concatenation:
$$
\prt_{l,r}(\vec m\cdot \vec n)=(\prt_{l,r} \vec m)\cdot \bigl((l)\tp  \vec n\bigr) + \bigl(\vec m\tp (r)\bigr)\cdot (\prt_{l,r} \vec n).
$$
At most one concatenation factor survives the action of $\prt_{l,r}$; then $\cdot$ in the right-hand side makes sense.

Define $p_{\Phi\Phi}\colon \Phi_X\tp_{ \hat U^0}\Phi_X \to \hat B^-$ as  the composition of
$p_\Phi\tp p_\Phi$ with the multiplication on $\hat B^-$. It is also a $\hat U^0$-bimodule homomorphism.
\begin{propn}
\label{intertwing loc}
For each $\al\in \Pi$ and each $\xi\in  \Phi_X$,
\be
\label{e-action}
e_\al p_\Phi (\xi)=p_{\Phi\Phi}\circ \sum_{(l,r)\in P(\al)}\pi^\al_{lr}\prt_{l,r}(\xi)+\bigl(\tau_{\al}^{-1}p_\Phi(\xi)\bigr) e_\al.
\ee
\end{propn}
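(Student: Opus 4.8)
The plan is to reduce the identity (\ref{e-action}) to the case of a single concatenation factor, i.e. to a single route, and then to induct on the length of that route using the derivation property of $\prt_{l,r}$ and the multiplicativity of $p_\Phi$ and $p_{\Phi\Phi}$ with respect to concatenation. First I would record the base cases. For a route $(i)$ of zero length both $p_\Phi((i))=1$ and all $\prt_{l,r}$ vanish, so (\ref{e-action}) collapses to the commutation relation $e_\al\cdot 1 = \tau_\al^{-1}(1)\,e_\al$, which is trivial. For a route of length one, say $l\stackrel{\al}{\longleftarrow}r$, one has $p_\Phi((l,r))=\phi_{lr}$, and the right-hand side of (\ref{e-action}) applied to the three cases in (\ref{prt on phi}) must reproduce $e_\al\phi_{lr}$. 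This is exactly the statement that the matrix $F=(\pi\tp\id)(\Fc)$ intertwines the $U^+$-action on $X$ with left multiplication on $\hat B^-$ in the prescribed way; it follows from the defining property of $\check\Ru$ as the twist, $\check\Ru\Delta(u)=\tilde\Delta(u)\check\Ru$, together with $\Delta(e_\al)=e_\al\tp q^{h_\al}+1\tp e_\al$, after applying $\pi\tp\id$ and extracting the first-order part encoded in $\Fc=\frac{1}{q-q^{-1}}(\check\Ru-1\tp1)$. I would carry out this computation explicitly once; it is the place where the coefficients $[h_\al]_q$, $q^{\pm h_\al}$ and $\pi^\al_{lr}$ in (\ref{prt on phi}) get pinned down, and it is essentially a transcription of the corresponding lemma in \cite{MS,M1}.

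Next I would do the inductive step. Write a general route as $\vec\xi=\vec m\cdot\vec n$ with $\vec m$ a single arrow (length one) and $\vec n$ shorter, so that $\phi_{\vec\xi}=p_\Phi(\vec m)p_\Phi(\vec n)$. Apply $e_\al$ on the left of $p_\Phi(\vec m)p_\Phi(\vec n)$, use the length-one case on the first factor to move $e_\al$ past $p_\Phi(\vec m)$, picking up $p_{\Phi\Phi}\circ\sum\pi^\al_{lr}\prt_{l,r}(\vec m)$ plus a term $\tau_\al^{-1}p_\Phi(\vec m)\,e_\al$; then apply the inductive hypothesis to move the surviving $e_\al$ past $p_\Phi(\vec n)$. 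Collecting the two contributions and using the derivation rule
$$
\prt_{l,r}(\vec m\cdot\vec n)=(\prt_{l,r}\vec m)\cdot\bigl((l)\tp\vec n\bigr)+\bigl(\vec m\tp(r)\bigr)\cdot(\prt_{l,r}\vec n)
$$
together with multiplicativity of $p_{\Phi\Phi}$, one gets exactly $p_{\Phi\Phi}\circ\sum\pi^\al_{lr}\prt_{l,r}(\vec\xi)+\tau_\al^{-1}p_\Phi(\vec\xi)\,e_\al$, as required. The only subtlety is bookkeeping the Cartan factors: the $q^{\pm h_\al}$ appearing in (\ref{prt on phi}) and the twist $\tau_\al^{-1}$ acting on the remainder of the route must be tracked weight-by-weight, since $\tau_\al^{-1}p_\Phi(\vec m)$ is $p_\Phi(\vec m)$ conjugated by a power of $q^{h_\al}$ determined by $\wt(\vec m)$, and this is precisely what makes the two "mixed" terms in the derivation rule line up with the two off-diagonal cases of (\ref{prt on phi}). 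Since both sides of (\ref{e-action}) are right $\hat U^0$-linear and $\prt_{l,r}$, $p_\Phi$ are defined on $\hat U^0$-lines by right-linearity, it suffices to verify the identity on routes themselves, which the induction does.

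The main obstacle I expect is the length-one base case, i.e. extracting (\ref{prt on phi}) together with the precise normalization constant $\frac{1}{q-q^{-1}}$ from the twist equation; one has to be careful that the three cases $(l,r)$, $(l,j)$ with $r\succ j$, and $(i,r)$ with $i\succ l$ exhaust all ways an arrow marked $\al$ can sit relative to a given route, and that the "zero otherwise" clause is consistent with $F$ being strictly lower triangular with entries only along $\al$-arrows. Once that is in place, the inductive step is a formal manipulation, so I would keep it terse and refer to \cite{MS} for the parallel computation there.
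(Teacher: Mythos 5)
Your proposal follows essentially the same route as the paper's proof: verify the identity on length-one routes by matching the commutation relation (\ref{intertwiner_F}) for the entries of $F=(\pi\tp\id)(\Fc)$ (a consequence of the twist property of $\check{\Ru}$) against the three cases of (\ref{prt on phi}), then extend to all routes via the derivation rule for $\prt_{l,r}$ and multiplicativity of $p_\Phi$, and finally handle the $\hat U^0$-coefficients with the relation $e_\al fh-\tau_\al^{-1}(fh)e_\al=[e_\al,f]h$. The argument is correct; only note that the length-one factor in your inductive decomposition is an arbitrary pair $i\succ j$, not necessarily an arrow.
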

\begin{proof}
The proof is analogous to the proof of  \cite{MS}, Proposition 6.4  with the difference that  the calculations are done exactly rather than
modulo $\Jc$. That accounts for the appearance of the right term on the right-hand side of (\ref{e-action}).

For $\xi=(i,j)$ we have $p_\Phi(\xi)=\phi_{ij}$.
Comparing the formula
\be
e_{\al}\phi_{ij}- \phi_{ij} e_{\al}=  \sum_{k\in I_X}\phi_{ik}q^{h_{\al}} \pi^\al_{kj}  -\sum_{k\in I_X} \pi^\al_{ik} q^{-h_{\al}}\phi_{kj}+\pi^\al_{ij}[h_\al]_q,
\quad
\al\in \Pi,
\label{intertwiner_F}
\ee
with (\ref{prt on phi}) we prove (\ref{e-action})  for all routes of length 1. Note that $\tau_\al^{-1}$ is identical on $U^-$
so the first term in right-hand side of (\ref{e-action}) gives an expression for the commutator with $e_\al$.
Using the Leibnitz rule we extend it to all routes, which generate $\Phi_X$ as a right $\hat U^0$-module.
To complete the proof, we utilize the relation $e_\al f h - \tau^{-1}_\al(fh)e_\al=[e_\al, f]h$ for all $f\in U^- $ and $h\in \hat U^0$.
\end{proof}
\noindent
Proposition \ref{intertwing loc} is the rationale for introducing the auxiliary module $\Phi_X$, as it allows to reduce the adjoint $U^-$-action
on $B^-$ to the action of simpler operators $\prt_{l,r}$.
\subsection{A construction of $\hat Z(\Ac,\g)$}
\label{SecLeftZ}
For a $U^+$-module $X$ suppose that  $\hat\Ac/\hat \Jc= \hat \Vc$ contains a submodule that is isomorphic to its left dual $X^*$. 
Then we can construct a $U^-$-invariant tensor $\psi_X=\sum_{i\in I_X} x_i\tp \psi_i\in X\tp X^*$. We call it right Mickelsson generator
relative to $X$.

In the previous section, we defined $\phi_{\vec m}$ as a $\B^-$-valued function of routes in $\Hg(X)$.
We extend the assignment $I_X\ni i\mapsto \psi_i\in \Vc$ to a  function on routes with values in a $B^-$-module $\Vc$ by
setting   $\psi_{\vec m}=\phi_{\vec m}\psi_{m_k}\in \Vc$, for  $\vec m=(m_1,\ldots, m_k)$.

Denote by $\rho\in \h^*$ the half-sum of positive roots. For each weight $\mu\in \Gamma_+$ define
\be
\eta_{\mu}=h_\mu+(\mu, \rho)-\frac{1}{2}(\mu,\mu), \quad
\tilde \eta_{\mu}=h_\mu+(\rho,\nu)+\frac{1}{2}(\mu,\mu)
\ee
as elements of $\h+\C$. Then $q^{\eta_\mu}$ and $q^{\tilde\eta_\mu}$ are elements of $\hat U^0$.
Clearly $\eta_\mu=\tilde \eta_\mu-||\mu||^2$. Denote a rational trigonometric function
$$
z\mapsto \varphi(z)=\frac{q^{-z}}{[z]_q},
$$ where $z$ is an indeterminate.
Next we  introduce a system of elements of $\hat U^0$ as functions of routes. They will be used as left and right multipliers in  construction of $\hat Z(\Ac,\g)$.

For each node $i\in I_X$ we denote $\eta_i=\eta_{\nu_i}$. For each ordered pair of nodes $i\succ j$ we define $\eta_{ij}=\eta_{\nu_i-\nu_j}$, $\tilde \eta_{ij}=\tilde \eta_{\nu_i-\nu_j}$, and set
$$
A^j_i=\varphi(-\eta_{ij}),\quad \tilde A^i_j=\varphi(\tilde \eta_{ij}),\quad B^i_j=\varphi(\eta_{i}-\eta_j).
$$
Furthermore, for each route $\vec m=(m_1,\ldots, m_k)\not =\varnothing $ we define
$$
A^j_{\vec m}=A^j_{m_1}\ldots A^j_{m_k}, \quad \tilde A^i_{\vec m}=\tilde A^i_{m_1}\ldots \tilde A^j_{m_k}, \quad B^i_{\vec m}=B^i_{m_1}\ldots B^i_{m_k},
$$
assuming $i\succ \vec m$ and $\vec m\succ j$.
One can check that $\tau_{\nu_i}(\eta_i-\eta_{j})=\tilde \eta_{ij}$ for all $i\succ j$. This gives a relation
\be
\tilde A^i_{\vec m} =\tau_{\nu_i}(B^i_{\vec m}).
\label{A-B}
\ee
It is also convenient to set $A^i_i=\tilde A^i_i=B^i_i=1$ for all $i\in I_X$.

\begin{propn}[\cite{MS}]
For each $i\in I_X$, the element
\be
z_i=\psi_{i}+\sum_{i\succ  \vec m \not =\varnothing }\psi_{(i,\vec m)}B^i_{\vec m}=\wp \psi_i
\label{norm_element}
\ee
belongs to $\in \hat\Vc^+$.
With identification  $\hat \Vc^+\simeq \hat Z(\Ac,\g)$, $z_i$ is  an element of $\hat Z(\Ac,\g)$.
  \end{propn}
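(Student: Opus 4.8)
\medskip
\noindent\textbf{Proof sketch (proposal).}
The plan is to split the statement into two assertions: (A) the element $z_i$ produced by the route sum is annihilated by $\Jc_+$ in $\hat\Vc$, hence defines a class in $\hat\Vc^+\simeq\hat Z(\Ac,\g)$; and (B) this class equals $\wp\psi_i$. It is convenient to rewrite
$z_i=\sum_{\vec n=(i,\vec m),\,i\succ\vec m}\psi_{\vec n}B^i_{\vec m}$,
where $\vec m$ runs over all routes with $i\succ\vec m$ (the empty one contributing the leading term $\psi_i$, since $\phi_i=1$ and the empty product of $B$'s is $1$), $\vec n=(i,\vec m)$, and $\psi_{\vec n}=\phi_{\vec n}\psi_{\min\vec n}$ with $\phi_{\vec n}\in U^-$ the product of the matrix entries of $F=(\pi\tp\id)(\Fc)$ along $\vec n$. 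When $X$ is finite dimensional this sum is finite; in general it is to be read in the completion in which $\wp$ lives.

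For part (A) I would apply $e_\al$, $\al\in\Pi$, to $z_i$ term by term and show the result lies in $\hat\Jc_+$. The engine is Proposition~\ref{intertwing loc}: since $\tau_\al^{-1}$ acts as the identity on $U^-\ni\phi_{\vec n}$, it gives
\[
e_\al\phi_{\vec n}=\phi_{\vec n}e_\al+p_{\Phi\Phi}\!\circ\!\sum_{(l,r)\in P(\al)}\pi^\al_{lr}\,\prt_{l,r}(\vec n).
\]
Acting on $\psi_{\min\vec n}$, the first summand yields $\phi_{\vec n}e_\al\psi_{\min\vec n}$, which is re-expressed through the $U^+$-module structure of $X^*$ (the Hasse action dual to that of $X$), while the second summand, via the explicit formulas (\ref{prt on phi}) and the multiplicativity $p_\Phi(\vec m\cdot\vec n)=p_\Phi(\vec m)p_\Phi(\vec n)$, splits each route into a shorter route plus a single node and produces further $\psi$-of-route terms, carrying $\hat U^0$-coefficients assembled from the $q^{\pm h_\al}$ and $[h_\al]_q$ factors. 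Collecting everything, $e_\al z_i$ becomes a (completed) sum $\sum_{\vec n'}\psi_{\vec n'}c_{\vec n'}$ with $c_{\vec n'}\in\hat U^0$, and the claim reduces to $c_{\vec n'}=0$ for every route $\vec n'$. This cancellation is governed by the single functional identity $[z]_q\varphi(z)=q^{-z}$ for $\varphi(z)=q^{-z}/[z]_q$, combined with the affine-linear behaviour of $\eta_\mu,\tilde\eta_\mu$ under shifts by roots (in particular $\tau_{\nu_i}(\eta_i-\eta_j)=\tilde\eta_{ij}$, $\eta_\mu=\tilde\eta_\mu-\|\mu\|^2$, and the additivity packaged into $A^j_{\vec m},\tilde A^i_{\vec m},B^i_{\vec m}$, cf. (\ref{A-B})), so that the contributions from $\prt_{l,r}$, from the dual $e_\al$-action on $X^*$, and from rewriting the $B$-factors telescope pairwise. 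This is the exact reformulation of the computation behind Proposition~6.4 of \cite{MS} and its consequences, now done identically rather than modulo $\Jc$.

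Part (B) is then essentially formal. First, $\wp\psi_i\in\hat\Vc^+$: by (\ref{extr_proj}) we have $\wp-1\in\breve\Jc_+$, so $e_\al\wp=e_\al+e_\al(\wp-1)\in\breve\Jc_+$ because $\breve\Jc_+$ is a left ideal of $\breve\Ac$ containing $e_\al$; lifting $\psi_i$ to $\hat\Ac$ and using $\hat\Jc_+=\breve\Jc_+\cap\hat\Ac$ gives $e_\al\wp\psi_i\in\hat\Jc_+$, i.e. $\wp\psi_i$ is $\Jc_+$-invariant. Next, $z_i$ and $\wp\psi_i$ have the same leading term $\psi_i$ for the double-coset (PBW) filtration: for $\wp\psi_i$ because $\wp-1\in\breve\Jc_-$, and for $z_i$ because each $\phi_{\vec n}$ with $|\vec n|\ge1$ has strictly negative weight, hence sits in the augmentation ideal of $U^-$. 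Since the composition $\hat\Vc^+\hookrightarrow\hat\Vc=\hat\Ac/\hat\Jc_+\twoheadrightarrow\hat\Ac/(\hat\Jc_-+\hat\Jc_+)\simeq\wp\hat\Ac\wp\simeq\hat Z(\Ac,\g)$ is an isomorphism — precisely Theorem~\ref{thmMick} read through the $\hat U^0$-module identifications recalled before it — an element of $\hat\Vc^+$ is determined by its image in the double-coset algebra; $z_i$ and $\wp\psi_i$ share that image, so $z_i=\wp\psi_i$. That $z_i$, viewed through $\hat\Vc^+\simeq\hat Z(\Ac,\g)$, is then an element of the Mickelsson algebra is a restatement.

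The main obstacle is plainly part (A): although conceptually it merely says "$\wp$ kills $\Jc_+$", converting this into the explicit route identity demands careful bookkeeping of how every route $\vec n'$ is produced — by deleting or inserting a node through $\prt_{l,r}$, or through the dual $e_\al$-action on $X^*$ — and a case-by-case verification that the $\hat U^0$-coefficients, written out through $\varphi$ and the $\eta$'s, cancel. Everything else — membership via the extremal projector, the identification with $\wp\psi_i$ by uniqueness in the double coset, and the passage to $\hat Z(\Ac,\g)$ — is soft and uses only the standing hypotheses (\ref{extr_proj}) and Theorem~\ref{thmMick}.
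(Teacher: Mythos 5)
Your overall architecture is sensible, and part (B) is a genuinely nice soft argument that the paper does not spell out: once one knows $z_i\in\hat\Vc^+$ and $\wp\psi_i\in\hat\Vc^+$, comparing their common image $\psi_i$ in the double-coset algebra and invoking the injectivity of $\hat\Vc^+\to\hat\Ac/(\hat\Jc_-+\hat\Jc_+)$ supplied by Theorem \ref{thmMick} does force $z_i=\wp\psi_i$. One local slip there: from $e_\al\wp\in\breve\Jc_+$ you cannot conclude $e_\al\wp\psi_i\in\breve\Jc_+$, because $\breve\Jc_+$ is a \emph{left} ideal and you are multiplying on the right by $\psi_i$. The correct (and shorter) route is the defining property $\hat\Jc_+\wp=0$ underlying the isomorphism $\hat\Ac/\hat\Jc_+\simeq\hat\Ac\wp$, which gives $e_\al\wp\psi_i=0$ outright.

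The genuine gap is part (A), which is the entire content of the proposition. You correctly set up the reduction via Proposition \ref{intertwing loc} to a family of $\hat U^0$-coefficients $c_{\vec n'}$ attached to routes, but the assertion that these ``telescope pairwise'' using only $[z]_q\varphi(z)=q^{-z}$ is exactly the claim to be proved, and it is not a routine verification: one must organize the routes into the 1-, 2- and 3-chains, show that $\prt_{l,r}$ annihilates all but the right 2-chains (Proposition \ref{chain-killer}, which itself needs the auxiliary module $\Psi_X$ and the twisted intertwiner $\gimel$), and then match the surviving terms against the dual $e_\al$-action on $X^*$ via the identity $c+h_\al=\eta_{ir}$. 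Without at least this chain decomposition your sketch has no mechanism guaranteeing that every route's coefficient vanishes. Note also that the paper itself does not prove the proposition where it is stated --- it is quoted from \cite{MS} --- and its own later derivation factors the computation differently from yours: it first identifies $z_i$ with a component of $\tilde S_X\psi_X$ (Proposition \ref{Mick_vector}), then proves the exact quasi-invariance $(1\tp e_\al)\tilde\Sc=(\id\tp\tau_\al^{-1})(\tilde\Sc)\tilde\Delta(e_\al)$ for the matrix $\tilde\Sc$ alone, so that membership in $\hat\Vc^+$ follows formally from $\tilde\Delta(e_\al)\psi_X=0$. That factorization isolates the hard cancellation inside a statement about $\tilde\Sc$ that is independent of $\Ac$ and of $\psi_X$, whereas your version entangles the two; if you want to complete your route, you should either import the chain argument wholesale or restructure along the paper's lines.
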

\noindent
Picking appropriate $X$ such that $X^*\subset \Ac$ one can obtain the entire $\hat Z(\Ac,\g)$. Alternatively, one can use $X$ to construct a PBW basis, as in \cite{MS}.
Next we give an example that did not enter \cite{MS}. It is motivated by a realization of Drinfeld's quantum double of $U$ \cite{D1} as a subalgebra in $U\tp U$ \cite{RS}.
\begin{example}
  \em
  Let $U$ be the quantum group $U_q(\g)$ of a simple Lie algebra $\g$ and set $\Ac=U_q(\g)\tp U_q(\g)$ with the diagonal embedding of $U$ via $\Delta$.
The corresponding Mickelsson algebra is responsible for decomposing tensor product of $U_q(\g)$-modules.
Observe that an extension $\breve \Ac$ accommodating the extremal projector of $U$ does exist. The algebra $\breve U$ is spanned by series in products  $u_-u_+ h$ of same weight, where $u_\pm\in U^\pm$ and $h\in \hat U^0$. It is sufficient to take $\breve \Ac=\breve U\tp \breve U$.

In order to construct a (right) Mickelsson generator, let $(Y,\varrho)$ be a fundamental $U$-module of minimal dimension and
define an action  $\pi$ on $E=\End(Y)$ by  $\pi(u)(x)=\varrho(u^{(2)})x\varrho\bigl(\gm^{-1}(u^{(1)})\bigr)$ (in the Sweedler notation for the coproduct), for $u\in U$ and $x\in E$.
We restrict $\pi$ to $U^+$ and take
 the matrix $ \psi_E=R_{12}^{-1}R_{31}\in E\tp U\tp U$  for a Mickelsson generator. It satisfies
$$
\bigl(\varrho(u^{(1)}) \tp u^{(2)}\tp u^{(3)}\bigr)\psi_E=\psi_E\bigl(\varrho(u^{(3)}) \tp u^{(1)}\tp u^{(2)}\bigr)=\psi_E\bigl(\varrho(u )\tp 1\tp 1\bigl) \mod E\tp \hat \Jc, \quad \forall u\in U^+.
$$
This is equivalent to $U^+$-invariance of  $\psi_E$ as a tensor from $E\tp \hat \Vc$.

The module $E$ is completely reducible and contains a submodule $X\subset E$ quantizing the classical coadjoint module $\g^*\simeq \g$
complementary  to  diagonally embedded $\g\subset \g\op \g$ (the classical double).
The invariant  projection  $E\to X$
  takes $\psi_E$ to $\psi_X$ whose components generate   a PBW basis in $\Ac$ over $U$, upon extension of the ring of scalars to formal power series
  in $\hbar=\ln q$.
Respectively, the components of the vector $Z_{X}=\tilde S_{X}\psi_{X}$ generate a PBW basis in $\hat Z(\Ac,U)$.
\end{example}

\section{Right Shapovalov matrix and  $\hat Z(\Ac,\g)$ }
\label{secRigthShMat}
In this section we relate the construction of  $\hat Z(\Ac,\g)$ via Hasse diagrams worked out in \cite{MS}  with the inverse Shapovalov form on  $U$.

Set $q=e^\hbar$ and denote by $U_\hbar(\g)$  the $\C[[\hbar]]$-extension of $U$ completed in the $\hbar$-adic topology \cite{D1}.
Denote
$
d= \frac{1}{2}\sum_i h_i h_i + h_\rho \in U_\hbar(\h).
$
\begin{lemma}
\label{[d,.]}
  Let $\phi \in U^-$ be an element of weight $-\mu<0$.
  Then
$$
[d,\phi]=-\tilde \eta_\mu \phi=-\phi \eta_\mu.
$$
\end{lemma}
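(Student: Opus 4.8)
The plan is to compute the bracket $[d,\phi]$ directly, using the definition $d=\tfrac12\sum_i h_ih_i+h_\rho$ and the fact that $\phi\in U^-$ has weight $-\mu$, meaning $q^{h_\al}\phi q^{-h_\al}=q^{-(\mu,\al)}\phi$ for all $\al\in\Pi$, equivalently $h\phi-\phi h=-\mu(h)\phi$ for every $h\in\h$ (here $\mu(h)$ is read off via the form, so $h_i\phi-\phi h_i = -(\mu,h_i)\phi$ writing $\mu(h_i)=(\mu,h_i)$ through the fixed $\ad$-invariant form). First I would handle the linear term: $[h_\rho,\phi]=-\mu(h_\rho)\phi=-(\mu,\rho)\phi$. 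Then the quadratic term $\tfrac12\sum_i[h_ih_i,\phi]$, using $[ab,c]=a[b,c]+[a,c]b$ for $c=\phi$ and the weight relation $[h_i,\phi]=-(\mu,h_i)\phi$. This gives $\tfrac12\sum_i\bigl(h_i(-(\mu,h_i)\phi)+(-(\mu,h_i)\phi)h_i\bigr)$.

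Next I would move all the $h_i$'s to one side to obtain a clean "one-sided" expression. Moving $h_i$ to the right past $\phi$ in the first summand picks up another weight shift: $h_i\phi = \phi h_i-(\mu,h_i)\phi$. Carrying this out, the quadratic part becomes $-\sum_i(\mu,h_i)\phi h_i + \tfrac12\sum_i(\mu,h_i)^2\phi$. Now I use that $\{h_i\}$ is an orthonormal basis of $\h$ relative to the fixed form, so $\sum_i(\mu,h_i)h_i = h_\mu$ (the element representing $\mu$) and $\sum_i(\mu,h_i)^2=(\mu,\mu)=\|\mu\|^2$. Combining with the $h_\rho$ term, $[d,\phi]=-\phi h_\mu+\tfrac12\|\mu\|^2\phi-(\mu,\rho)\phi=-\phi\bigl(h_\mu+(\mu,\rho)-\tfrac12(\mu,\mu)\bigr)=-\phi\,\eta_\mu$, using the definition of $\eta_\mu$ given just before the lemma. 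Alternatively, moving the $h_i$'s to the left instead of the right yields $[d,\phi]=-h_\mu\phi-\tfrac12\|\mu\|^2\phi-(\mu,\rho)\phi$; rewriting $h_\mu\phi=\phi h_\mu-(\mu,\mu)\phi$, wait — more directly one observes $-h_\mu\phi-\tfrac12\|\mu\|^2\phi = -\tilde\eta_\mu\phi + (\text{correction})$, so it is cleaner to note $\tilde\eta_\mu$ and $\eta_\mu$ differ by $\|\mu\|^2$ (as recorded in the text, $\eta_\mu=\tilde\eta_\mu-\|\mu\|^2$) together with $h_\mu\phi=\phi h_\mu-(\mu,\mu)\phi$, and check that $-\tilde\eta_\mu\phi=-\phi\eta_\mu$ reduces to exactly this identity. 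That establishes both equalities simultaneously.

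The only point requiring a little care is the bookkeeping of the weight shift: whether to write $h_\mu$ on the left or the right of $\phi$, since $h_\mu\phi\ne\phi h_\mu$, and this is precisely what distinguishes $\tilde\eta_\mu$ (weight-shifted, acting on the left) from $\eta_\mu$ (acting on the right) via the automorphism $\tau$. I expect this is the main — and essentially the only — obstacle, and it is resolved by the relation $\tau_{-\mu}(h)\phi=\phi h$ for $h\in\hat U^0$ and $\phi$ of weight $-\mu$ already noted in Section \ref{SecQuantGroups}, which gives $\tilde\eta_\mu=\tau_{\mu}(\eta_\mu)$ up to the sign convention and hence $-\tilde\eta_\mu\phi=-\phi\eta_\mu$. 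Note also that $d\in U_\hbar(\h)$ is central in $U_\hbar(\h)$ but not in $U_\hbar(\g)$, so the bracket is genuinely nonzero; the computation takes place in $U_\hbar(\g)$ where $d$ makes sense, and everything is $\C[[\hbar]]$-linear, so no convergence issue arises.
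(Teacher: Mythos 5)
Your computation is correct and is exactly the direct weight calculation the paper has in mind (its proof is simply declared ``straightforward''): expand $[d,\phi]$ term by term using $[h,\phi]=-\mu(h)\phi$, collect the Cartan elements on one side to get $-\phi\eta_\mu$ or on the other to get $-\tilde\eta_\mu\phi$, with the two expressions reconciled by $\phi h_\mu=h_\mu\phi+(\mu,\mu)\phi$ and $\eta_\mu=\tilde\eta_\mu-\|\mu\|^2$. Nothing is missing.
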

\begin{proof}
Straightforward.
\end{proof}
\noindent It follows  that the operator $q^{ [d, - ]}$
leaves  a $\C[q,q^{-1}]$-submodule $\sum_{\mu>0}\hat B^-[-\mu]\subset \hat U_\hbar(\b_-)$ invariant, and $\frac{q^{2[d, - ]}-\id}{q-q^{-1}}$
is invertible on it.
The inverse $\varphi([d, -])$  acts as right multiplication by $\varphi(-\eta_\mu )$ and left multiplication by $\varphi(-\tilde \eta_\mu )$
on each subspace of weight $-\mu<0$.
Then the operators  $\varphi(\pm D)$ with $D=\id \tp [d, -]$ are  well defined
on the subspace $\sum_{\mu>0} U^+[\mu]\tp \hat B^-[-\mu]$.

Introduce an element $\tilde \Sc$ in the completed tensor product $U^+ \tp \hat B^-$ as
\be
\label{right S}
\tilde \Sc=\sum_{n=0}^{\infty } \tilde \Sc^{(n)}, \quad \mbox{where}\quad
\tilde \Sc^{(0)}=1\tp 1,\quad \tilde \Sc^{(n+1)}= \varphi(-D)(\tilde \Sc^{(n)}\Fc),
\quad n\geqslant 0.
\ee
The series is truncated when the left tensor leg is sent to $\End(X)$, with $\dim(X)<\infty$. If we define $\hgt(X)$ as the height of the difference
between the highest and lowest weight of $X$, then  $\tilde \Sc^{(n)}=0$ for $n>\hgt(X)$,
so the sum contains at most $\hgt(X)+1$ terms.

Formula (\ref{right S})  gives an explicit expression for $\tilde S^{(n)}_X=(\pi\tp \id)(\tilde \Sc^{(n)})\in \End(X)\tp \hat B^-$
and hence for $\tilde S_X=\sum_{n=0}^\infty \tilde S^{(n)}$ if one knows the matrix $(\pi\tp \id)(\check{\Ru})$.
It is greatly simplified in the classical limit because $\lim_{q\to 1}\Fc= \sum_{\al\in \Rm^+}e_\al\tp f_\al$, the inverse
invariant form $\g_-\tp \g_+\to \C$.
\begin{propn}
   Components  of vector $z_X=\tilde S_X \psi_X$ equal (\ref{norm_element}).
\label{Mick_vector}
\end{propn}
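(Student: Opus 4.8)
The plan is to compute $z_X=\tilde S_X\psi_X$ directly from the recursion (\ref{right S}), reduce everything to the finite-dimensional module $X$, and match the outcome with (\ref{norm_element}) route by route. Write $\psi_X=\sum_{i\in I_X}x_i\tp\psi_i$ and $\tilde S^{(n)}_X=(\pi\tp\id)(\tilde\Sc^{(n)})=\sum_{i,l}e_{il}\tp G^{(n)}_{il}$ with $G^{(n)}_{il}\in\hat B^-$. Since $e_{il}x_j=\dt_{lj}x_i$, the component of $z_X$ along $x_i$ equals $\sum_{l}\sum_{n\geq0}G^{(n)}_{il}\psi_l$, and the claim is that this equals the element $z_i$ of (\ref{norm_element}). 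Because $F=(\pi\tp\id)(\Fc)$ is strictly lower triangular with entries $\phi_{ij}$ of weight $\nu_j-\nu_i$, iterating $\tilde\Sc^{(n+1)}=\varphi(-D)(\tilde\Sc^{(n)}\Fc)$ produces, after reduction, a sum over descending chains (routes) $i=l_0\succ l_1\succ\ldots\succ l_n=l$ of the products $\phi_{(i,l_1,\ldots,l_n)}=\phi_{l_0l_1}\cdots\phi_{l_{n-1}l_n}$, each multiplied on the right by an element $c^{(n)}(i,l_1,\ldots,l_n)\in\hat U^0$.

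To obtain the recursion for $c^{(n)}$, observe that appending $\Fc$ on the right introduces the factor $\phi_{l_nl_{n+1}}$; moving the accumulated $\hat U^0$-coefficient (which has weight $0$ for the adjoint action) past it by means of $h\phi=\phi\,\tau_w(h)$ (equivalently $\phi h=\tau_{-w}(h)\phi$) for $\phi$ of weight $w$, the subsequent $\varphi(-D)$ acts on an element of total weight $\nu_{l_{n+1}}-\nu_i$, hence by Lemma \ref{[d,.]} as right multiplication by $\varphi(\eta_{\nu_i-\nu_{l_{n+1}}})$. Thus
\be
c^{(0)}=1,\qquad c^{(n+1)}(i,l_1,\ldots,l_{n+1})&=&\tau_{\nu_{l_{n+1}}-\nu_{l_n}}\bigl(c^{(n)}(i,l_1,\ldots,l_n)\bigr)\,\varphi(\eta_{\nu_i-\nu_{l_{n+1}}}).
\nn
\ee
Since $\psi_{l_n}$ has weight $-\nu_{l_n}$ in $\hat\Vc$ (so that $\psi_X$ is of weight zero), $c^{(n)}\psi_{l_n}=\psi_{l_n}\,\tau_{-\nu_{l_n}}(c^{(n)})$, and I would prove by induction on $n$, using $\tau_\mu\circ\tau_\nu=\tau_{\mu+\nu}$, that $\tau_{-\nu_{l_n}}\bigl(c^{(n)}(i,l_1,\ldots,l_n)\bigr)=B^i_{l_1}\cdots B^i_{l_n}=B^i_{\vec m}$ for $\vec m=(l_1,\ldots,l_n)$. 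The inductive step reduces to the one-node identity $\tau_{-\nu_l}\bigl(\varphi(\eta_{\nu_i-\nu_l})\bigr)=\varphi(\eta_i-\eta_l)=B^i_l$, i.e. $\tau_{\nu_l}(\eta_i-\eta_l)=\eta_{\nu_i-\nu_l}$, a short computation of the same type as the identity $\tau_{\nu_i}(\eta_i-\eta_j)=\tilde\eta_{ij}$ behind (\ref{A-B}). Assembling, the component of $z_X$ along $x_i$ becomes $\psi_i+\sum_{i\succ\vec m\neq\varnothing}\phi_{(i,\vec m)}\psi_{\min(\vec m)}B^i_{\vec m}=\psi_i+\sum_{i\succ\vec m\neq\varnothing}\psi_{(i,\vec m)}B^i_{\vec m}$, which is exactly (\ref{norm_element}).

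The step I expect to demand the most care is the bookkeeping of the $\tau$-twists: verifying that "append $\Fc$, then apply $\varphi(-D)$ to the whole product" contributes, after conjugation by $\tau_{-\nu_{l_n}}$, exactly one new factor $B^i_{l_{n+1}}$, with all intermediate shifts telescoping. As an independent cross-check that bypasses the combinatorics, one can instead note that $z_X\in X\tp\hat\Vc^+$ follows from the intertwining of $\tilde S_X$ with $\tilde\Delta$ together with the $\tilde\Delta$-invariance of $\psi_X$, that the zero-length term of the $x_i$-component is $\psi_i$ because $\tilde\Sc^{(0)}=1\tp1$, and that by Theorem \ref{thmMick} the element of $\hat\Vc^+$ with leading term $\psi_i$ is unique and equals $\wp\psi_i$; by the quoted result of \cite{MS} this is again (\ref{norm_element}).
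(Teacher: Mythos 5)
Your argument is correct and is essentially the paper's own proof: both identify the expansion of $\tilde S_X\psi_X$ with (\ref{norm_element}) route by route, using Lemma \ref{[d,.]} to read off the Cartan multipliers produced by $\varphi(-D)$ and the shift identity $\tau_{\nu_l}(\eta_i-\eta_l)=\eta_{\nu_i-\nu_l}$ (the same computation that underlies (\ref{A-B})) to convert them into the factors $B^i_{\vec m}$. The only cosmetic difference is that you carry the $\hat U^0$-coefficients as right multipliers through the recursion and telescope the $\tau$-twists by induction, whereas the paper moves $B^i_{\vec m}$ to the left in one step as $\tilde A^i_{\vec m}=\tau_{\nu_i}(B^i_{\vec m})$ and quotes the left-multiplier form of $\tilde S^{(n)}_{ij}$.
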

\begin{proof}
Let $(i,\vec m)$ with $\vec m=(m_1,\ldots, m_k)$ be a route.
Since the weight of  $\psi_{i,\vec m}$ is $-\nu_i$, formula (\ref{A-B}) implies
$
\psi_{i,\vec m}B^i_{\vec m}=\tilde A^i_{\vec m}\psi_{i,\vec m}=\tilde A^i_{\vec m} \phi_{i,\vec m}\psi_{m_k}$.
But the sum of $\tilde A^i_{\vec m}\phi_{i,\vec m}$ over  all $\stackrel{\vec m}{\longleftarrow} j$ such that  $i\succ \vec m$ and $n=|\vec m|+1$ is exactly $\tilde S^{(n)}_{ij}$, thanks to  Lemma \ref{[d,.]}.
\end{proof}

Our next objective is to demonstrate that $\tilde \Sc$ is almost  the  right Shapovalov matrix.
We lift $\tilde A^i_{\vec m}\phi_{i,\vec m}$ to
$
[i,\vec m]=\tilde A^i_{\vec m}(i,\vec m)\in \Phi_X
$
in order to use Proposition \ref{intertwing loc} and reduce the study of $e_\al$-action to the action of operators $\prt_{l,r}$
with $(l,r)\in P(\al)$.

Fix an ordered pair of nodes $i\succ j$. For   each $\prt_{l,r}$ we define 1-, 2-, and 3-chains in as $\hat U^0$-linear combinations of routes
$i\dashleftarrow j$:
\begin{itemize}
  \item
$
[\vec m]
$
if $\vec m\cap (l,r)=\varnothing$ or $\vec m\cap (l,r)\not=\varnothing$ but $\vec m\cup (l,r)$ is not a route $i\dashleftarrow j$,
  \item
$
[l,\vec \rho]+[l,r,\vec \rho], \quad \mbox{with}\quad l=i,\quad
\quad [\vec \ell,l,r]+[\vec \ell,r],\quad \mbox{with}\quad  r=j,
$
  \item
$
[\vec \ell,l,\vec \rho]+[\vec \ell,l,r,\vec \rho]+[\vec \ell,r,\vec \rho].
$
\end{itemize}
Here we assume that $i=\max(\vec \ell)$ and $j=\min(\vec\rho)$, so neither $\vec \ell$ nor $\vec \rho$ are empty.
One can prove that every route $i\dashleftarrow j$ participates in exactly one chain, cf. \cite{MS}, Lemma 7.1.

\begin{propn}
\label{chain-killer}
  The operator $\prt_{l,r}$ annihilates  1-, 3-, and left 2-chains.
\end{propn}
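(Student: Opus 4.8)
The plan is to compute $\prt_{l,r}$ explicitly on each of the three listed combinations of routes, using the derivation property of $\prt_{l,r}$ with respect to concatenation together with the formulas (\ref{prt on phi}) on routes of length one, and to check that the contributions cancel. The key observation that makes the cancellations work is that the chains are built so that removing a pair of adjacent nodes or inserting the node between $l$ and $r$ produces precisely two (or three) routes whose $\prt_{l,r}$-images differ only by the factors $[h_\al]_q$, $q^{h_\al}$, $q^{-h_\al}$ coming from (\ref{prt on phi}), and the $\hat U^0$-coefficients $\tilde A^i_{\vec m}$ are designed so that these factors combine into the relation $\varphi(z)$ obeys. So the first step is to recall that $\prt_{l,r}$ kills routes of zero length and that at most one concatenation factor survives its action; hence on a concatenation $\vec m\cdot\vec n$ only the factor actually containing the arrow $l\stackrel{\al}{\longleftarrow}r$ (or creating it) contributes.

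For the 1-chains, I would argue that if $\vec m\cap(l,r)=\varnothing$, or if $\vec m$ meets $(l,r)$ but $\vec m\cup(l,r)$ fails to be a route $i\dashleftarrow j$, then there is no way for the arrow $l\stackrel{\al}{\longleftarrow}r$ to appear as an adjacent pair inside $[\vec m]$ after any single concatenation split: in the first case $\prt_{l,r}$ acts as zero on every length-one factor of $p_\Phi(\vec m)$ because none of them is the arrow $(l,r)$ and none can be split as $(l,j)$ with $r\succ j$ or $(i,r)$ with $i\succ l$ compatibly with the order on $\vec m$; in the second case any would-be contribution would force $\vec m\cup(l,r)$ to be a route, contradiction. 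So $\prt_{l,r}[\vec m]=0$ termwise. For the left 2-chains $[\vec\ell,l,r]+[\vec\ell,r]$ with $r=j$, I would split each route at the node $l$ (resp. at the would-be position of $l$) and apply the derivation rule: $\prt_{l,r}$ acting on the second factor of $[\vec\ell,l,r]$ uses the first line of (\ref{prt on phi}) giving $(\vec\ell,l)q^{h_\al}\tp\ldots$ wait — more precisely it hits the length-one route $(l,r)$ producing $(l)\tp(r)[h_\al]_q$ after accounting for the $q^{h_\al}$ twist from the left factor — while $\prt_{l,r}$ acting on $[\vec\ell,r]$ uses the third line of (\ref{prt on phi}), splitting $(\vec\ell,r)$ as $(i,r)\mapsto(i,l)q^{h_\al}\tp(r)$ at the node just above $r$; these two terms then differ exactly by the coefficient relation between $\tilde A^i_{(l,r,\dots)}$ and $\tilde A^i_{(r,\dots)}$, which by the definition of $\tilde A$ and $\varphi$ is $\varphi(\tilde\eta_{il})[h_\al]_q - q^{h_\al}=$ the telescoping identity, and they cancel. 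The 3-chains are handled the same way, splitting at $l$, with an extra interior term $[\vec\ell,l,r,\vec\rho]$ whose $\prt_{l,r}$-image on the length-one middle arrow $(l,r)$ supplies the $[h_\al]_q$ term that reconciles the two "gap" terms $[\vec\ell,l,\vec\rho]$ and $[\vec\ell,r,\vec\rho]$.

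The main obstacle I expect is bookkeeping the $\hat U^0$-valued coefficients: $\prt_{l,r}$ is only a right $\hat U^0$-module map, so when it acts on $[\vec m]=\tilde A^i_{\vec m}(\vec m)$ the left multiplier $\tilde A^i_{\vec m}$ must be pushed through the tensor factor $q^{\pm h_\al}$ appearing in (\ref{prt on phi}), which shifts its weight by $\pm\al$; keeping track of these shifts and verifying that they assemble into exactly the identity $\varphi(z)$ satisfies (essentially $[z]_q\varphi(z)=q^{-z}$ together with additivity $\eta_{\mu+\al}=\eta_\mu+\eta_\al+\langle\text{correction}\rangle$) is the delicate part. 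I would organize this by first proving the cancellation for chains of length one and two directly from (\ref{prt on phi}) and the definition of $\tilde A$, then invoking the derivation property exactly as in the proof of Proposition \ref{intertwing loc} to reduce the general case to concatenating a short "active" chain with inert tails $\vec\ell$, $\vec\rho$ that merely multiply through. The parallel statement in \cite{MS}, Lemma 7.1 and the surrounding discussion should provide the combinatorial skeleton; the only genuine change here is that the identities must hold exactly in $\hat B^-$ rather than modulo $\hat\Jc$, which is why the right-hand correction term in (\ref{e-action}) is present but, being a right multiplication by $e_\al$, does not interfere with the $\prt_{l,r}$-bookkeeping.
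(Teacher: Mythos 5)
Your treatment of the 1-chains is fine, but there is a genuine error in the 2-chain case: you have interchanged left and right 2-chains. The combination $[\vec \ell,l,r]+[\vec \ell,r]$ with $r=j$ that you compute with is the \emph{right} 2-chain, and it is precisely the one that is \emph{not} annihilated by $\prt_{l,r}$ --- the whole quasi-invariance identity (\ref{quasi-inv}) rests on its survival. The computation around (\ref{essent}) shows that $\prt_{l,r}\bigl([i,\vec\ell,l,r]+[i,\vec\ell,r]\bigr)=[i,\vec\ell,l]\tp (r)\,q^{-h_\al}\neq 0$, because the coefficient combination you call a ``telescoping identity'' in fact evaluates via $\bigl([h_\al]_q+q^{h_\al}\varphi(c)^{-1}\bigr)\varphi(\eta_{ir})=q^{-h_\al}$ with $c=\eta_{il}-(\al,\nu_i-\nu_l)$, not to zero. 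Had you pushed your bookkeeping through, you would have found this nonzero remainder rather than a cancellation. The chain you must treat is the left 2-chain $[l,\vec\rho]+[l,r,\vec\rho]$ with $l=i$, where the relevant patterns from (\ref{prt on phi}) are the first line applied to the arrow $(l,r)$ inside $[l,r,\vec\rho]$ against the second line applied to $(l,\rho_1)$ inside $[l,\vec\rho]$; the third line $(i,r)\mapsto (i,l)q^{h_\al}\tp(r)$, on which your sketch leans, plays no role there. That case is never addressed in your proposal, and the 3-chain verification is likewise only asserted, not carried out.

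Your overall strategy (direct cancellation inside $\Phi_X$) also differs from the paper's, which handles the 2- and 3-chains by embedding $\Phi_X$ into the auxiliary module $\Psi_X$ via $\gimel\colon \phi_{\vec m}h\mapsto \psi_{\vec m}\tau_{\nu_j}^{-1}(h)$ and observing that $\prt_{l,r}$ is intertwined with the operator $\nbl_{l,r}$ of \cite{MS} exactly on routes whose minimal node is different from $l$; left 2-chains and 3-chains lie in that submodule and are sent to chains already known to be killed by \cite{MS}, Lemma 7.2. A direct computation for the correct chains is certainly feasible and would make the argument self-contained, but as written the proposal neither identifies the right chains nor verifies the coefficient identities for any of them, so it does not establish Proposition \ref{chain-killer}.
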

\begin{proof}
Let us focus on chains of routes $i\dashleftarrow j$.
First of all observe that $\prt_{l,r}$ kills all 1-chains. This case includes routes $\vec m$ whose smallest node $j$ equals $l$,
because otherwise there is a route $(\vec m,r)$ which is not  $\dashleftarrow j$, see  the definition of 1-chains above.

For 2- and 3-chains, we reduce the proof to  \cite{MS}, Lemma 7.2  by sending $\Phi_X$  to another auxiliary $\hat U^0$-module $\Psi_X$ and replacing $\prt_{l,r}$ with an operator $\nbl_{l,r}\colon \Phi_X\to \Phi_X\tp_{\hat U^0}\Psi_X$. Indeed, there is a natural  embedding $\gimel\colon\Phi_X\to \Psi_X$ that is a lift of the assignment
$\phi_{\vec m}h\mapsto \psi_{\vec m}\tau_{\nu_j}^{-1}(h)$ for all $\vec m$ with $j=\min(\vec m)$ and $h\in \hat U^0$.
It is a left $\hat U^0$-module homomorphism, but a "dynamically twisted" homomorphism of right $\hat U^0$-modules.

The map $\gimel$ links  $\prt_{l,r}$   with $\nbl_{l,r}$ by the formula
$$
\nbl_{l,r}\circ\gimel (\vec m h)=(\id\tp \gimel) \circ \prt_{l,r}(\vec m h)+\bigl(\vec m\tp \id\bigr) \cdot \nbl_{l,r}(j)\tau_{\nu_j}^{-1}(h),
$$
for all $i\stackrel{\vec m}{\longleftarrow} j$ and $h\in \hat U^0$.
In particular,  $\prt_{l,r}$ and $\nbl_{l,r}$ are intertwined on the $\hat U^0$-submodule generated by routes whose smallest node $j$ is annihilated by $\nbl_{l,r}$.
That is, exactly when  $j\not =l$, cf. \cite{MS}.
All 3- and left 2-chains are in that submodule and go over to chains $\Psi_X$, which are killed by $\nbl_{l,r}$, so the statement follows.
\end{proof}
Note with care that a right 2-chain is not killed by $\prt_{l,r}$. It is not mapped to a chain in $\Psi_X$, so the above reasoning is unapplicable in this case.
This is not incidental and  facilitates the following quasi-invariance of $\tilde \Sc$.

\begin{propn}
\label{Prop-quasi-inv}
  The matrix $\tilde \Sc\in U^+\tp \hat B^-$ satisfies the identity
\be
(1\tp e_\al)\tilde \Sc=(\id\tp \tau_\al^{-1})(\tilde \Sc)\tilde \Delta(e_\al),\quad \forall \al \in \Pi.
\label{quasi-inv}
\ee
\end{propn}
\begin{proof}
For each $i\succ \vec m\succ j$ we introduce $C^i_{\vec m,j}=\tau_{\nu_j}(B^i_{\vec m,j})\in \hat U^0$, so that
$$
[i,\vec m,j]=(i,\vec m,j)C^i_{\vec m,j}\in \Phi_X.
$$
One can check that, in particular, $C^i_j=\varphi(\eta_{ij})$.

We write down the commutation relation  between $e_\al$ and the matrix entry $S_{ij}$  as
\be
e_\al \tilde S_{ij}=p_{\Phi\Phi}\sum_{i\succ  \vec m\succ j}\sum_{(l,r)\in P(\al)}\pi^\al_{lr}\prt_{l,r}
[i,\vec m,j]+\tau_\al^{-1} \tilde S_{ij}e_\al,
\label{quasi_invariant}
\ee
thanks to Proposition \ref{intertwing loc}.
Let us change the order of summations and examine the terms $$\sum_{i\succ  \vec m\succ j} \prt_{l,r}
[i,\vec m,j]=
\prt_{l,r}\sum_{i\succ  \vec m\succ j}
(i,\vec m,j)C^i_{\vec m,j}
$$
 separately for each pair $(l,r)\in P(\al)$.
This summation
can be rearranged   over $(l,r)$-chains, of which only
right 2-chains survive the action of  $\prt_{l,r}$, by Proposition \ref{chain-killer}.
For such a chain, $\prt_{lr}\bigl ([i,\vec \ell, l,r]+[i,\vec \ell, r]\bigr)$ with $r=j$, we get
\be
\label{essent}
\bigl(\prt_{l,r}(i,\vec \ell,l,r)C^i_{l}+ \prt_{l,r}(i,\vec \ell,r)\bigr)C^i_{r}C^i_{\vec \ell}=
 \bigl(\prt_{l,r}(i,\vec \ell,l,r)+ \prt_{l,r}(i,\vec \ell,r)(C^i_l)^{-1}\bigr)C^i_{r}C^i_{\vec \ell,l}.
\ee
 Explicitly, $C^i_l=\varphi(c)$, where $c=\eta_{il}-(\al,\nu_i-\nu_l)$ and $C^i_r=\varphi(\eta_{ir})$.
Notice also  that
\be
c+h_\al&=&
h_{i}-h_l+h_\al+ (\nu_i-\nu_l,\rho)-\frac{1}{2}(\nu_i-\nu_l)^2+h_\al-(\al,\nu_i-\nu_l)
=
\eta_{ir}.
\nn
\ee
Using these equalities we write down the factor before $C^i_{\vec \ell,l}$ in (\ref{essent}) explicitly:
$$
\bigl(i,\vec \ell, l)\tp (r)\bigr)\bigl([h_\al]_q+ q^{h_\al}\varphi(c )^{-1}\bigr)C^i_r =\bigl(i,\vec \ell, l)\tp (r)\bigr) \frac{(q^{h_\al}-q^{-h_\al})+ q^{h_\al}(q^{2c}-1)}{q-q^{-1}} \varphi(\eta_{ir}).
$$
The Cartan factor on the right amounts to $q^{-h_\al}$.
Thus (\ref{essent}) equals $[i,\vec\ell,l]\tp (r)q^{-h_\al}$ because zero length routes in $\Phi_X$ carry zero weights.
Returning from $\Phi_X\tp_{\hat U^0} \Phi_X$ back to $\hat B^-$ via $p_{\Phi\Phi}$ we obtain
\be
\label{quasi-invariance}
e_\al \tilde S_{ij}=\sum_{l} \sum_{i\succ  \vec \ell  \succ l}\tau_\al^{-1}\tilde A^i_{i,\vec \ell,l}\phi_{(i,\vec \ell, l)}\pi^\al_{lj}q^{-h_\al}+\tau_\al^{-1} \tilde S_{ij}e_\al
=
\sum_{l} \tau_\al^{-1}\tilde S_{il}\pi^\al_{lj}q^{-h_\al}+\tau_\al^{-1} \tilde S_{ij}e_\al.
\ee
This is a coordinate form of equation (\ref{quasi-inv}) in the representation $(X,\pi)$.
\end{proof}

Recall that a (left) Verma module $V_\la$ of highest weight $\la\in \h^*$ is induced  from
a one dimensional $B^+$-module $\C_\la$  of weight $\la$ that is trivial on $U^+\subset B^+$.
Let $v_\la\in V_\la$ denote the highest weight generator and $X$ be a finite dimensional $U$-module.
 The universal Shapovalov matrix $\Sc$ is a unique element of a completed tensor product $U^+\tp \hat B^-$ that sends $X\tp v_\la$ onto
the subspace of $U^+$-invariants in $X\tp V_\la$, for every $X$ and generic $\la$.  The matrix $\Sc$ delivers the inverse invariant
pairing between $V_\la$ and the Verma module of lowest weight $-\la$. This pairing
is equivalent to the canonical contravariant form on $V_\la$, which is a specialization of the Shapovalov form $U^-\tp U^-\to U^0$ at $\la$.

Similarly one can consider  right Verma modules $\C_\la  \tp_{B^+} U$ and define the right Shapovalov matrix,
which of course depends on the comultiplication on $U$. This matrix turns out to be $\tilde \Sc$ up to a twist by  the squared antipode $\tilde \gm^2$.
Recall that the latter acts by $e_\al\mapsto q^{-(\al,\al)}e_\al$ on the positive generator of root $\al$.
\begin{corollary}
  The element  $(\tilde \gm^{-2}\tp \id)(\tilde \Sc)$ is the universal right Shapovalov matrix  with respect
  to $\tilde \Delta$. In particular, it satisfies
\be
\label{Shap-intertw}
(1\tp e_\al)(\tilde \gm^{-2}\tp \tau_\al)(\tilde \Sc)= (\tilde \gm^{-2}\tp \id)(\tilde \Sc)\tilde \Delta (e_\al)
\ee
for each $\al\in \Pi$.
\end{corollary}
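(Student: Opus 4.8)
The plan is to identify $(\tilde\gm^{-2}\tp\id)(\tilde\Sc)$ with the universal right Shapovalov matrix $\Sc^{r}$ relative to $\tilde\Delta$ by matching characterizing properties; once this is done the displayed formula (\ref{Shap-intertw}) is immediate. Dually to the description of the left Shapovalov matrix recalled above, $\Sc^{r}$ is the unique element of $U^+\tp\hat B^-$ whose component of zero height in the grading by $U^+[\bt]\tp\hat B^-[-\bt]$, $\bt\in\Ga_+$, equals $1\tp 1$ and which carries $X\tp\bar v_\la$ onto the $U^+$-invariants of $X\tp(\C_\la\tp_{B^+}U)$ for all finite dimensional $X$ and generic $\la$, the tensor product being formed with $\tilde\Delta$ and $\bar v_\la=1_\la\tp 1$ the canonical generator; equivalently, $\Sc^{r}$ is the unique normalized solution of the intertwining equation (\ref{Shap-intertw}). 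Uniqueness is immediate because reading (\ref{Shap-intertw}) component by component in $\hgt(\bt)$ gives a recursion of exactly the shape of (\ref{right S}).

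Since $\tilde\Sc^{(0)}=1\tp 1$ by (\ref{right S}) and $\tilde\gm^{-2}$ is unital, the twisted matrix has the correct normalization, so everything reduces to checking that $(\tilde\gm^{-2}\tp\id)(\tilde\Sc)$ solves (\ref{Shap-intertw}). I would obtain this from the quasi-invariance (\ref{quasi-inv}) of the previous proposition by applying the algebra automorphism $\tilde\gm^{-2}$ to the first tensor leg. The ingredients are: $\tilde\gm^{2}$ agrees on $U^+$ with conjugation by $q^{-2h_\rho}$, hence scales $U^+[\bt]$ by $q^{-2(\rho,\bt)}$ and in particular satisfies $\tilde\gm^{-2}(u e_\al)=q^{(\al,\al)}\tilde\gm^{-2}(u)e_\al$, as $2(\rho,\al)=(\al,\al)$ for $\al\in\Pi$; the commutation rule $e_\al h=\tau_\al^{-1}(h)e_\al$ on $\hat U^0$; and the fact that the matrix entry $\tilde S_{ij}=(\pi\tp\id)(\tilde\Sc)_{ij}$ is homogeneous of weight $\nu_j-\nu_i$, so that the twist rescales it by $q^{2(\rho,\nu_i-\nu_j)}$. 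Written in the coordinate form (\ref{quasi-invariance}), the scalar $q^{2(\rho,\nu_i-\nu_j)}$ produced on the $X$-slot has to be moved past $\pi^\al_{lj}$; since the arrow $l\stackrel{\al}{\longleftarrow}j$ forces $\nu_l=\nu_j+\al$, this releases a compensating power which — together with the $q^{(\al,\al)}$ above and a reorganization of the Cartan factors via $e_\al h=\tau_\al^{-1}(h)e_\al$ — turns $\tau_\al^{-1}$ into $\tau_\al$ on the surviving entries and collapses the right-hand factor into $\tilde\Delta(e_\al)$, giving (\ref{Shap-intertw}) in the representation $(X,\pi)$ and hence, $X$ being arbitrary, universally.

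One should cross-check this against the direct meaning of the assertion: that $(\tilde\gm^{-2}\tp\id)(\tilde\Sc)$ sends $X\tp\bar v_\la$ into the invariant subspace of $X\tp(\C_\la\tp_{B^+}U)$. For that it is convenient to invoke Proposition \ref{Mick_vector}, which already pins down the action of $\tilde\Sc$ on the relevant vectors, together with the standard identification of the inverse contravariant form on $V_\la$ with the invariant pairing against the Verma module of lowest weight $-\la$; the squared antipode then enters precisely through the asymmetry between left and right Verma modules, and the sign in $\tilde\gm^{-2}$ (as opposed to $\tilde\gm^{2}$) tracks the sign of $\rho$ in Lemma \ref{[d,.]} — i.e. the discrepancy between the $d$-normalization of $\tilde\Sc$ and the Shapovalov normalization.

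I expect the main obstacle to be exactly the $q$-power bookkeeping in the middle step: one must control simultaneously the grading shift under $\tilde\gm^{-2}$, the non-innerness of $\tau_\al$, the weights carried by $\pi^\al_{lj}$ and by the Cartan factors $\varphi(\cdot)$ built into $\tilde S_{ij}$, and verify that these combine to reproduce $\tilde\Delta(e_\al)$ with exactly the stated twist and no extraneous factor. A more prosaic preliminary point is to fix the conventions for $\C_\la\tp_{B^+}U$ and for the module structure on $X\tp(\C_\la\tp_{B^+}U)$ so that (\ref{Shap-intertw}) is literally the defining intertwining relation of $\Sc^{r}$ rather than a twist of it.
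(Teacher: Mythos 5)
Your proposal is correct and follows essentially the same route as the paper: the paper likewise derives (\ref{Shap-intertw}) from the quasi-invariance identity (\ref{quasi-inv})/(\ref{quasi-invariance}) by applying $\tau_\al$ to the coordinate form and absorbing the resulting factor $q^{-(\al,\al)}$ into $\tilde\gm^{2}$ acting on the first leg of $\tilde\Delta(e_\al)$, which after twisting by $\tilde\gm^{-2}\tp\id$ gives the stated identity. Your extra remarks on uniqueness via the height recursion and on the normalization supply details the paper leaves implicit, but they do not change the argument.
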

\begin{proof}
To verify (\ref{Shap-intertw}), apply the automorphism $\tau_\al$  to (\ref{quasi-invariance}) and get
$$
e_\al \tau_\al(\tilde S_{ij})=
\sum_{l}  \tilde S_{il}\pi^\al_{lj}q^{-(\al,\al)}q^{-h_\al}+  \tilde S_{ij}e_\al.
$$
This is a coordinate presentation of an identity in $\End(X)\tp \hat B^-$  that gives rise to a  universal form
$$
(1\tp e_\al)(\pi\tp \tau_\al)(\tilde \Sc)=(\pi\tp \id)(\tilde \Sc)(\pi\circ \tilde \gm^{2}\tp \id)\bigl(\tilde \Delta (e_\al)\bigr),
$$
whence (\ref{Shap-intertw}) follows. It  implies that $(\tilde \gm^{-2}\tp \id)(\tilde \Sc)$ is the right Shapovalov matrix.
\end{proof}
Another corollary of Proposition (\ref{Prop-quasi-inv}) is Proposition \ref{Mick_vector}, which thereby becomes a consequence  of quasi-invariance
(\ref{quasi-inv}) of the matrix $\tilde \Sc$ and invariance of the vector $\psi_X$. This explains the construction of $\hat Z(\Ac,\g)$
via Hasse diagrams presented in \cite{MS}.

Our studies demonstrate that the Shapovalov matrix has one more incarnation besides the inverse contravariant form and  intertwining operators  for Verma modules \cite{EV}.
A morphism from the category of right modules  acts as an operator in the category of left modules. The key role here belongs to
 quasi-invariance (\ref{Shap-intertw}), where zero left-hand side modulo   $\langle e_\al\rangle_{\al\in \Pi}$ is sufficient for an intertwiner.
The exact identity  (\ref{Shap-intertw}) has not been employed before, to the best of our knowlege.
We thereby conclude that  the Shapovalov matrix is a construct in its own right and should be distinguished from the inverse Shapovalov form.
\section{An alternative construction of  $\hat Z(\Ac,\g)$}
\label{secLeftShMat}
\subsection{Mickelsson algebras and left Shapovalov matrix}
\label{SecHasseDiag}
We are going to present an alternative description of the algebra $\hat Z(\Ac,\g)$ in terms of the  Shapovalov matrix $\Sc$ relative to left Verma modules.
It is a  unique element of a completed tensor product $U^+\tp \hat B^-$
 that satisfies
$$
\Delta(e_\al) \Sc\in U^+\tp \hat B^-e_\al, \quad \forall \al \in \Pi.
$$
If $V_\la$ is an irreducible Verma module with highest vector $v_\la$ and $X$ is a finite dimensional $U$-module, then the tensor
$\Sc(v\tp v_\la)$ is $U^+$-invariant for any $v\in X$, and any $U^+$-invariant tensor in $X\tp V_\la$ is obtained this way.

As well as the matrix $\tilde \Sc$, the left Shapovalov matrix  can be explicitly  expressed through the universal R-matrix:
\be
\label{geom F}
 \Sc=\sum_{n=0}^{\infty } \Sc^{(n)}, \quad \mbox{where}\quad
\Sc^{(0)}=1\tp 1,\quad \Sc^{(n+1)}=\varphi(D)\bigl( \Fc\Sc^{(n)}\bigr),
\quad n\geqslant 0.
\ee
For a graded $U^+$-module $X$ with representation homomorphism $\pi\colon U^+\to \End(X)$, the entries of the matrix
$S_X=(\pi\tp \id)(\Sc)$ in a weight basis in $X$ read
\be
 S_{ij }= \sum_{i\succ \vec m\succ j }\phi_{i,\vec m, j}A^j_{i,\vec m}.
\nn
\ee
The summation is performed over all possible routes $i\dashleftarrow j$, see \cite{M1} for details.

\newcommand{\Yc}{\mathcal{Y}}

We regard $\Ac$ as an adjoint $U$-module under the action
$$
x\tr a=\ad(x)(a)=x^{(1)}a \gm(x^{(2)}), \quad \forall x\in U, \quad \forall a\in \Ac.
$$
Suppose that  $X$ is realized  as a submodule in $\Ac$ with respect to the adjoint action of $U^+$ on $\Ac$.
Let $X^*$ be the right dual to $X$ with action $\btr$.
Define a right $U^+$-action on $X^*$  by $y\tl u=\gm(u)\btr y$ for $y\in X^*$
and consider $X^*\tp \hat \Ac$ as a right $U^+\tp \hat B^-$-module with the regular action of $B^-$ on $\hat \Ac$.

Let $\psi_{X^*}\subset X^*\tp X$ denote the invariant tensor fixed up to a scalar factor. We call it left Mickelsson generator relative to $X$.
It satisfies the identities
\be
\label{psi-invar}
\bigl(\id_X \tp  \ad(u)\bigr)(\psi_{X^*})=\psi_{X^*}(u\tp \id), \quad (\id_X \tp  u)(\psi_{X^*})=\psi_{X^*}\Delta(u), \quad \forall u\in U^+,
\ee
which are equivalent to $U^+$-invariance.
In a weight basis $\{\psi_i\}_{i\in I_X}\subset X$ and   its dual basis $\{y_i\}_{i\in I_X}\subset X^*$,
we have $\psi_{X^*}=\sum_{i\in I_X}y_{i}\tp \psi_{i}$.
 \begin{propn}
  The   vector
$$
Z_{X^*}=\psi_{X^*} \Sc (1\tp 1_\Vc)=\sum_{i,k\in I_X}y_i\tp \psi_{k}S_{ki} 1_{\hat \Vc}
$$
  belongs to $X^*\tp \hat \Vc^+$.
\end{propn}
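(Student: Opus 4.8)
The plan is to show that the $U^+$-action annihilates every component of $Z_{X^*}$ modulo $\hat\Jc_+$, i.e. that $e_\al \cdot Z_{X^*}\in X^*\tp \hat\Jc_+$ for all $\al\in\Pi$, which is exactly the statement $Z_{X^*}\in X^*\tp\hat\Vc^+$. The natural way to do this is to exploit the two invariance identities for $\psi_{X^*}$ in (\ref{psi-invar}) together with the defining intertwining property of the left Shapovalov matrix $\Sc$, namely $\Delta(e_\al)\Sc\in U^+\tp\hat B^- e_\al$. First I would act on $Z_{X^*}=\psi_{X^*}\Sc(1\tp 1_{\hat\Vc})$ by $\id_{X^*}\tp e_\al$. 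The right action of $B^-$ on $\hat\Ac$ means that, before projecting to $\hat\Vc$, the element $\psi_i S_{ki}$ lives in $\hat\Ac$ and we want to compute $e_\al$ applied on the left; but $Z_{X^*}$ sits in $X^*\tp\hat\Vc$, so "$e_\al\cdot$" means the left $U$-module structure on $\hat\Vc=\hat\Ac/\hat\Jc_+$. So I would instead argue at the level of $\hat\Ac$: I want $e_\al\,\psi_k S_{ki}\equiv(\text{something in }\hat\Jc_+)$ after summing appropriately against the invariance of $\psi_{X^*}$.

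The key computation is the following. Using the second identity in (\ref{psi-invar}), $(\id\tp u)(\psi_{X^*})=\psi_{X^*}\Delta(u)$, apply it with $u=e_\al$ to rewrite $(\id\tp e_\al)\psi_{X^*}\Sc=\psi_{X^*}\Delta(e_\al)\Sc$. Now invoke the defining property of $\Sc$: $\Delta(e_\al)\Sc\in U^+\tp\hat B^- e_\al$. Therefore $\psi_{X^*}\Delta(e_\al)\Sc$ is a sum of terms of the form $\psi_{X^*}(x\tp b e_\al)$ with $x\in U^+$, $b\in\hat B^-$. On each such term, move the left $U^+$-leg $x$ across $\psi_{X^*}$ using the \emph{first} identity in (\ref{psi-invar}): $\psi_{X^*}(x\tp\id)=(\id\tp\ad(x))(\psi_{X^*})$. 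This produces terms $(\id\tp\ad(x))(\psi_{X^*})(1\tp be_\al)$, whose $\hat\Ac$-components lie in $\hat\Ac e_\al\subseteq\hat\Jc_+$ (since $\ad(x)$ preserves $\hat\Ac$ and then we multiply on the right by $b e_\al$, and crucially $e_\al$ sits at the far right so the component is in $\hat\Ac e_\al$). Hence $(\id\tp e_\al)Z_{X^*}\in X^*\tp\hat\Jc_+$, which after passing to the quotient $\hat\Vc$ says precisely that each component of $Z_{X^*}$ is killed by $e_\al$, i.e. $Z_{X^*}\in X^*\tp\hat\Vc^+$.

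The main obstacle, and the step requiring care, is bookkeeping the two comultiplication legs and the side on which $U^+$ acts: the first identity in (\ref{psi-invar}) converts an action on the $X^*$-leg (via $\ad$) into an action on the $X$-leg that is $\psi_{X^*}(u\tp\id)$, and one must check these can be composed so that the leftover $e_\al$ genuinely ends up on the right of the $\hat\Ac$-factor, giving membership in $\hat\Jc_+=\hat\Ac\g_+$ rather than some mixed expression. I would also need to confirm that the completed-tensor-product manipulations are legitimate — that is, that $\Delta(e_\al)\Sc$ really lies in $U^+\tp\hat B^- e_\al$ as a convergent series in the relevant grading (this is the defining property quoted at the start of Section~\ref{secLeftShMat}, applied to $\Sc$), and that applying $\ad(x)$ term-by-term to the series for $\Sc$ is justified because only finitely many terms contribute in each weight once the left leg is evaluated in the finite-dimensional module $X$. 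Modulo these checks, the proof is a direct chain of the two invariances of $\psi_{X^*}$ and the intertwining property of $\Sc$.
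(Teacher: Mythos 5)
Your argument for membership in $X^*\tp\hat\Vc^+$ is correct and is essentially the paper's own proof: apply the second identity of (\ref{psi-invar}) to get $\psi_{X^*}\Delta(e_\al)\Sc$, invoke $\Delta(e_\al)\Sc\in U^+\tp\hat B^-e_\al$, and observe that the residual $U^+$-leg acts harmlessly while the $\hat\Ac$-component ends in $e_\al$ and hence lies in $\hat\Jc_+$. The only piece you omit is the verification of the stated coordinate formula $\psi_{X^*}\Sc(1\tp 1_{\hat\Vc})=\sum_{i,k}y_i\tp\psi_k S_{ki}1_{\hat\Vc}$, which the paper derives first from the \emph{first} identity in (\ref{psi-invar}) by moving $\Sc^+$ from the $X^*$-leg to an adjoint action on the $X$-leg.
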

\begin{proof}
Let us check the coordinate form of $Z_{X^*}$  given by the right equality first. Using symbolic Sweedler notation $\Sc= \Sc^+\tp \Sc^-$ we write $S_{ki}= \pi(\Sc^+)_{ki} \Sc^-$ for all $k,i\in I_X$.
Then the left formula in (\ref{psi-invar}) gives
$$
\psi_{X^*} \Sc =\sum_{i\in I_X}y_i \Sc^+\tp \psi_i \Sc^-=\sum_{i\in I_X}y_i \tp \bigl(\ad(\Sc^+)(\psi_i)\bigr) \Sc^-=
\sum_{i\in I_X}y_i \tp \sum_{k\in I_X} \pi (\Sc^+)_{ki} \psi_k\Sc^-,
$$
which immediately implies the required equality. Furthermore, for all $u\in U^+$ we have
$$
(1\tp u)Z_{X^*}=\bigl(\psi_{X^*}\Delta(u)\bigr)\Sc    =\psi_{X^*}\bigl(\Delta(u)\Sc\bigr )=\eps(u)\psi_{X^*}\Sc =\eps(u)Z_{X^*},
$$
 modulo $X^*\tp \hat \Jc_+$. We conclude that the entries of $Z_{X^*}$ are in $\hat \Vc^+$, whence the assertion follows.
\end{proof}

Thus constructing  elements of $\hat Z(\Ac,\g)$ out of $\Ac$ reduces to finding $\psi_{X^*}$ for appropriate $X$.
Remark that the invariance condition can be relaxed to
quasi-invariance
$$
(\id_X \tp  u)(\psi_{X^*})=\psi'_X\Delta(u), \quad \forall u\in U^+,
$$
for some $\psi'_X$ not necessarily equal to $\psi_{X^*}$ (and possibly depending on $u$).

\begin{example}
\em
  Suppose that $\Ac$ is a universal enveloping algebra $U(\a)$ of a Lie algebra $\a\supset \g$.
  Let $\a=X\op \g$ be a decomposition to a direct sum of modules. Put $\Zc$ to be the sum $\sum_{n=1}^{\infty}S^n(X)$ of symmetrized powers $X^n$.
  It freely generates $\Ac$ over $U$ thanks to the PBW theorem.
  It is a locally finite $\g$-module, and the canonical element $\psi_{\Zc^*}\in \Zc^*\tp \Zc$ is a universal left Mickelsson generator. Its reduction to  $X^*\tp X
  $ delivers a PBW system in $\hat Z(\a,\g)$ over $\hat U(\h)$.

\end{example}

In order to relate the above  construction with  extremal projector, consider the situation when
$X$ is an $U$-module and the element $\psi_{X^*}$ is $U$-invariant.
Let $\Theta\in U\tp \hat U_0$ be the image of $\Sc$ under the map
$$
U^+\tp \hat B_-\to U^+\tp U^-\tp \hat U^0\to U\tp \hat U^0,
$$
where the left arrow is the triangular decomposition isomorphism, and the right one acts by  $x\tp y\tp h\mapsto \gm^{-1}(y)x\tp h$.
It is known that $\Theta$ is invertible. When specialized at a generic $\la\in \h^*$, it  becomes the universal extremal twist  \cite{M3};
it is
 also related with dynamical Weyl group \cite{EV}.
Now we derive the action of $\wp$  on the components of vector $\psi_{X^*}$:
\be
\label{Mick-el}
Z_{X^*}=(\id \tp\wp) \psi_{X^*}  \Theta = \psi_{X^*} S_X
\ee
modulo $X^*\tp \Jc_+$. Thus the left Shapovalov matrix does not provide immediate evaluation of the $\wp$-action, but through
the extremal twist $\Theta$. This is a distinction from the approach based on the right Shapovalov matrix.
\subsection{Quantum Lax operators}
In this section we construct left Mickelsson generators for the case of quantum reductive pairs.
Suppose that $\a$ is a simple Lie algebra and $\g\subset \a$ is the commutant of a Levi subalgebra.
Let $\k\supset \h$ denote the Cartan subalgebra of $\a$,  $\a_\pm \supset \g_\pm$the maximal nilpotent subalgebras,
and $\c$ the orthogonal complement of $\h$ in $\k$.
The quantum group $U_q(\g)$ is a natural Hopf subalgebra in $U_q(\a)$, for which   the  conventions of Section \ref{SecQuantGroups}
are in effect.

It is known that $U_q(\a)$ contains a locally finite $U_q(\g)$-submodule $\Zc$ such  that
$$
U_q(\a)=U_q(\g_-)\Zc U_q(\c) U_q(\b_+).
$$
It can be constructed from quantized nilradicals of the parabolic subalgebras $\g+\c+\a_\pm$. They are locally finite as $U_q(\g)$-modules and their existence is proved in \cite{Ke}.
The corresponding Mickelsson generator $\psi_{\Zc^*}$ gives rise to   the entire $\hat Z(\a,\g)$, as a module over $\hat U_q(\k)$.
Next we address the question   of PBW basis in $\hat Z(\a,\g)$.

By $\Ru$ we now understand the universal R-matrix of $U_q(\a)$, and $\check{\Ru}$ is obtained from it similarly as in the case of $U_q(\g)$ considered before.
The intertwining identities for $\Ru$ translate to the following relations for the matrix $\check{\Ru}$.
\be
 \check{R}(e_{\al}\tp q^{h_{\al}} + 1\tp e_{\al})&=& (e_{\al}\tp q^{-h_{\al}} + 1\tp e_{\al})\check{R},
\label{IRe}
\\
\check{R}(f_\al\tp 1 + q^{-h_\al}\tp f_\al)&=&(q^{h_\al}\tp f_\al + f_\al \tp 1)\check{R},
\label{IRf}
\\
(e_{\al}\tp q^{h_{\al}} + 1\tp e_{\al}) \check{R}^{-1}&=& \check{R}^{-1}(e_{\al}\tp q^{-h_{\al}} + 1\tp e_{\al}),
\label{IReInv}
\\
(f_\al\tp 1 + q^{-h_\al}\tp f_\al) \check{R}^{-1}&=& \check{R}^{-1} (q^{h_\al}\tp f_\al + f_\al \tp 1),
\label{IRfInv}
\ee
Here $\al$ is a simple root from $\Pi_\a$, although we need these identities only for $\al\in \Pi_\g$.
Note that $\check{\Ru}^{-1}\in U_q(\a_+)\tp U_q(\a_-)$.

Recall that, for the given comultiplication $\Delta$, the adjoint action on the generators of the quantum group
explicitly reads
$$
\ad(e_\al) u= e_\al u q^{-h_\al } - u  e_\al q^{-h_\al},
\quad
\ad(f_\al) u=f_\al u - q^{-\left(\al,\wt(u)\right)} u f_\al,
$$
for all $\al\in \Pi_\g$ and $u\in U_q(\a)$.

Fix a $U_q(\a)$-module $V$ with representation homomorphism $\varrho\colon U_q(\a)\to \End(V)$ and consider the quantum
Lax operators
$$
L^-_X=(\varrho\tp \id)(\check{\Ru}^{-1}) \in \End(V)\tp U_q(\a_-),
\quad
L^+_X=(\id \tp \varrho)(\check{\Ru})\in U_q(\a_+)\tp \End(V).
$$
Suppose that  $V^\g\not \not =\{0\}$ and pick  a   $U_q(\g)$-invariant non-zero weight vector $v_0\in V^\g$.
Define
$$
\psi^-_{i}=L^-_{i0} ,\quad
\psi^+_{i}=L^+_{i0}q^{h_{\nu_i}}.
$$
These elements of $U_q(\a)$ carry weights  $\wt(\psi^-_{i})=\nu_0-\nu_i< 0$ and  $\wt(\psi^+_{i})=\nu_0-\nu_i> 0$.
The weight $\nu_0$ is orthogonal to $\Pi_\g$.
\begin{propn}
\label{psi-adjoint}
For each $v_0\in V^\g$ the vector spaces $\Span\{\psi^-_{i}\}_{0\prec i}$ and $\Span\{\psi^+_{i}\}_{i\prec 0}$ are invariant under the adjoint action of $U_q(\g)$
on  $U_q(\a)$. Specifically,
\be
\ad(e_\al)(\psi^+_{i})&=& - \sum_{k\prec 0}q^{(\al,\nu_i)}\varrho(e_{\al})_{ik}   \psi^+_{k}=\sum_{k\prec 0}\varrho\bigl(\tilde \gm^{-1}(e_{\al})\bigr)_{ik}\psi^+_{k},
\nn\\
 \ad(f_\al)(\psi^+_{i})&=&  -\sum_{k\prec 0} \varrho(f_\al)_{ik} q^{-(\al,\nu_k)} \psi^+_{k}=\sum_{k\prec 0} \varrho\bigl(\tilde \gm^{-1}(f_\al)\bigr)_{ik} \psi^+_{k},
 \nn\\
\ad(e_\al)(\psi^-_{i})&=&  -\sum_{0\prec k}\varrho(e_{\al})_{ik}q^{-(\al,\nu_k)}\psi^-_{k}=  \sum_{0\prec k}\varrho\bigl( \gm(e_{\al})\bigr)_{ik}\psi^-_{k},
\nn\\
\ad(f_\al)(\psi^-_{i})&=&   -\sum_{0\prec k} q^{(\al,\nu_i)}\varrho(f_\al)_{ik} \psi^-_{k}= \sum_{0\prec k} \varrho\bigl( \gm(f_\al)\bigr)_{ik} \psi^-_{k}.
\nn
\ee
\end{propn}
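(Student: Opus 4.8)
The plan is to push each of the four intertwining identities (\ref{IRe})--(\ref{IRfInv}) through the representation $\varrho$ and then extract the matrix entry linking the invariant vector $v_0$ to $v_i$. Applying $\id\tp\varrho$ to the two identities (\ref{IRe}), (\ref{IRf}) for $\check{\Ru}$ (which produces $L^+$) and $\varrho\tp\id$ to the two identities (\ref{IReInv}), (\ref{IRfInv}) for $\check{\Ru}^{-1}$ (which produces $L^-$), and then reading off the $(i,0)$-component in a weight basis of $V$, one obtains commutation relations between the elementary generators and the Lax entries $L^\pm_{i0}$, of the shape
$$
L^+_{i0}e_\al=q^{-(\nu_i,\al)}e_\al L^+_{i0}+\sum_k\varrho(e_\al)_{ik}L^+_{k0},\qquad
L^+_{i0}f_\al=f_\al L^+_{i0}+\sum_k\varrho(f_\al)_{ik}q^{h_\al}L^+_{k0},
$$
together with the analogous pair for $L^-_{i0}$. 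The decisive simplification here is that every term containing a factor $\varrho(e_\al)_{k0}$ or $\varrho(f_\al)_{k0}$ drops out, because $v_0\in V^\g$ is annihilated by $\varrho(e_\al)$ and $\varrho(f_\al)$ for $\al\in\Pi_\g$, while $\varrho(q^{\pm h_\al})v_0=v_0$ since $\nu_0\perp\Pi_\g$; this is exactly what collapses the a priori two-sided relations to the displayed form.

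Next I would feed these relations into the explicit adjoint action $\ad(e_\al)u=e_\al u q^{-h_\al}-u e_\al q^{-h_\al}$ and $\ad(f_\al)u=f_\al u-q^{-(\al,\wt(u))}u f_\al$ with $u=\psi^\pm_i$, using $\wt(L^\pm_{i0})=\nu_0-\nu_i$ and the Cartan rules $q^{h_\mu}e_\al=q^{(\mu,\al)}e_\al q^{h_\mu}$, $q^{h_\mu}f_\al=q^{-(\mu,\al)}f_\al q^{h_\mu}$ to move the tail $q^{h_{\nu_i}}$ of $\psi^+_i=L^+_{i0}q^{h_{\nu_i}}$ past the generators ($\psi^-_i=L^-_{i0}$ carries no tail). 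In each of the four cases the ``Cartan'' contribution $e_\al L^\pm_{i0}\cdots$ or $f_\al L^\pm_{i0}\cdots$ cancels identically against the first term of the relevant commutation relation, and what survives is the sum over $k$ weighted by $\varrho(e_\al)_{ik}$ or $\varrho(f_\al)_{ik}$. For such $k$ one has $\nu_k=\nu_i\mp\al$, so the residual Cartan factors reassemble into $q^{h_{\nu_k}}$ and regenerate $\psi^\pm_k$ with precisely the scalar ($-q^{(\al,\nu_i)}$ or $-q^{-(\al,\nu_k)}$ according to the case) claimed in the statement. The sums are automatically supported on the index ranges given there ($0\prec k$ for $\psi^-$, $k\prec 0$ for $\psi^+$), since otherwise $L^\pm_{k0}=0$, and $k=0$ never occurs because $\varrho(e_\al)_{i0}=\varrho(f_\al)_{i0}=0$; hence the two spans are stable under $\ad(e_\al)$ and $\ad(f_\al)$ for $\al\in\Pi_\g$, and since $\psi^\pm_i$ is a weight vector of weight $\nu_0-\nu_i$ they are also $\ad(q^{\pm h_\al})$-eigenvectors, so both spans are $\ad(U_q(\g))$-submodules. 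Identifying the scalars with matrix entries of $\gm$ and of $\tilde\gm^{-1}$ applied to the generators is then a one-line computation from the antipode formulas of Section \ref{SecQuantGroups}: for instance $\gm(e_\al)=-e_\al q^{-h_\al}$ yields $\varrho(\gm(e_\al))_{ik}=-q^{-(\al,\nu_k)}\varrho(e_\al)_{ik}$, while $\tilde\gm^{-1}(e_\al)=-q^{(\al,\al)}e_\al q^{h_\al}$ yields $\varrho(\tilde\gm^{-1}(e_\al))_{ik}=-q^{(\al,\nu_i)}\varrho(e_\al)_{ik}$ using $\nu_k=\nu_i-\al$, and analogously for $f_\al$.

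I expect the only real obstacle to be the index-and-exponent bookkeeping of the first step, not any conceptual point: one must keep straight which tensor leg carries $\End(V)$ (the left leg for $L^-$, the right leg for $L^+$), whether a given generator multiplies the Lax matrix on the left or on the right in (\ref{IRe})--(\ref{IRfInv}), and the weight offset $\nu_i-\nu_k=\pm\al$ that governs the surviving powers of $q$. Once these are pinned down, every sign and $q$-power is forced and the four formulas come out in parallel.
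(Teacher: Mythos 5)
Your proposal is correct and follows exactly the route the paper takes: its (two-line) proof likewise sends the appropriate tensor leg of (\ref{IRe})--(\ref{IRfInv}) into the representation and invokes $\varrho(e_\al)_{k0}=\varrho(f_\al)_{k0}=(\al,\nu_0)=0$; you have simply written out the commutation relations for $L^\pm_{i0}$ and the cancellation in the adjoint action that the authors leave implicit. The sign and $q$-power bookkeeping in your sketch checks out, including the identifications with $\varrho(\gm(\cdot))_{ik}$ and $\varrho(\tilde\gm^{-1}(\cdot))_{ik}$.
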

 \begin{proof}
 This is obvious with regard to the subalgebra $U^0$ as the elements $\psi^\pm_i$ carry definite weights. To complete the proof, send to the representation the appropriate tensor factor in
(\ref{IRe}-\ref{IRfInv}) and take into account that $\varrho(f_\al)_{k0}=\varrho(e_\al)_{k0}=(\al,\nu_0)=0$ for all $\al\in \Pi_\g$ and all $k\in I_V$.
\end{proof}
Under the assumptions of Proposition \ref{psi-adjoint}, one can construct Mickelsson generators by restricting  the representation $\varrho$ to  $U_q(\g)$ and take $X=\Span\{\psi^-_{i}\}_{0\prec i}$  with the representation homomorphism $\pi=\varrho^t\circ \gm$
or $X=\Span\{\psi^+_{i}\}_{i\prec 0}$ with $\pi=\varrho^t\circ \tilde\gm^{-1}$. Here  $t$ stands for the matrix transposition.

In order to construct a PBW basis in $\hat Z(\a,\g)$, one can take for  $V$  the irreducible finite dimensional $U_q(\a)$-module $\a_q$ whose highest weight is the maximal root.
Let $\g_q\subset \a_q$ be the analogous $U_q(\g)$-module.
  The sum $\sum_{\al\in \Rm_\a\backslash \Rm_\g} \a_q[\al]$ forms a self-dual $U_q(\g)$-submodule,
$\a_q/(\a_q[0]+\g_q)$. In the classical limit, it goes over to $\a/(\c\op \g)$ whose elements generate a PBW basis in $U(\a)/\bigl(\g_-U(\g)+U(\g)\g_+\bigr)$
as a $U(\k)$-module. Each irreducible $U_q(\g)$-submodule in $\a_q/(\a_q[0]+\g_q)$ can be constructed along the lines of Proposition \ref{psi-adjoint}
by picking an appropriate  vector $v_0\in \a_q^\g[0]$. Taking the direct sum of such modules for $X$ one arrives at a Mickelsson
generator $\psi_{X^*}$. Multiplying it by left Shapovalov matrix on the right yields a vector whose components generates a PBW basis
in each weight of $\hat Z(\a,\g)$, for generic $q$, or equivalently, over $\C[[\hbar]]$.
With regard to a particular $q$, the problem  reduces to the question if the entries of L-operators generate a PBW basis in $U_q(\a)$.

 \vspace{20pt}
 \vspace{20pt}
\noindent

\vspace{20pt}
\noindent

\underline{\large \bf Acknowledgement}
\vspace{10pt} \\
This work was done at the Center of Pure Mathematics MIPT.
It is financially supported  by Russian Science Foundation  grant 23-21-00282.

\subsection*{Declarations}
\subsubsection*{Data Availability}
 Data sharing not applicable to this article as no datasets were generated or analysed during the current study.
\subsubsection*{Funding}
This research is  supported  by Russian Science Foundation  grant 23-21-00282.
\subsubsection*{Competing interests}
The authors have no competing interests to declare that are relevant to the content of this article.

\end{document}